\newtheorem{theorem}{Theorem}[section]
\newtheorem{cor}[theorem]{Corollary}
\newtheorem{lemma}[theorem]{Lemma}
\newtheorem{prop}[theorem]{Proposition}
\theoremstyle{definition}
\newtheorem{define}[theorem]{Definition}
\newtheorem{ex}[theorem]{Example}
\newtheorem{remark}[theorem]{Remark}
\newcommand{\field}[1]{\mathbb{#1}}
\newcommand{\zkz}{\field{Z}/k\field{Z}}
\begin{document}
\title{Index theory for manifolds with Baas-Sullivan singularities}
\author{Robin J. Deeley}
\date{\today}
\begin{abstract}We study index theory for manifolds with Baas-Sullivan singularities using geometric $K$-homology with coefficients in a unital $C^*$-algebra. In particular, we define a natural analog of the Baum-Connes assembly map for a torsion-free discrete group in the context of these singular spaces. The cases of singularities modelled on $k$-points (i.e., $\zkz$-manifolds) and the circle are discussed in detail. In the case of the former, the associated index theorem is related to the Freed-Melrose index theorem; in the case of latter, the index theorem is related to work of Rosenberg.
\end{abstract}
\subjclass[2010]{Primary: 19K33; Secondary: 19K56, 46L85, 55N20  }
\keywords{$K$-homology, Manifolds with Baas-Sullivan singularities, the Freed-Melrose index theorem}
\maketitle
\section{Introduction}
We consider index theory for manifolds with Baas-Sullivan singularities using the framework of geometric K-homology. We also discuss the Baum-Connes conjecture in this context. Manifolds with Baas-Sullivan singularities were introduced in \cite{Baas} following work in \cite{MS}. They are a generalization of the concept of a $\zkz$-manifold and are well studied objects in differential topology. The reader can find introductions to these objects in \cite{Bot} or \cite[Chapter VIII]{Rud}.
\par
Let $P$ be a smooth, compact, ${\rm spin^c}$-manifold. Informally (see Definition \ref{Pmfld} for the precise definition), a smooth, compact, ${\rm spin^c}$ manifold with Baas-Sullivan singularities modelled on $P$ is the following data: a compact, ${\rm spin^c}$ manifold with boundary, $Q$, whose boundary is diffeomorphic, in a ${\rm spin^c}$-preserving way, to $P \times \beta Q$ for some closed smooth, compact, ${\rm spin^c}$ manifold, $\beta Q$. We denote such an object by $(Q,\beta Q)$ and refered to such an object simply as a $P$-manifold. A $\zkz$-manifold is a $P$-manifold in the case when $P=k$-points. Our goal is the construction of index theorems for such objects; prototypical examples are the Freed-Melrose index theorem for $\zkz$-manifolds \cite{Fre, FM} and the index theorem for $S^1$-manifolds in \cite{Ros}. In addition, the results of this paper are a generalization of our previous work in \cite{Dee1, Dee2}; in those papers, only the case of $\zkz$-manifolds was considered.
\par
In order to construct the index map and Baum-Connes assembly map for these objects, we define an abelian group via Baum-Douglas type cycles; the reader is directed to any of \cite{BD, BD2, BHS, Rav, Wal} for further details on these cycles and the resulting model. Before discussing our construction, we recall the basic definitions of the Baum-Douglas model of K-homology of a finite CW-complex, $X$, with coefficients in a unital $C^*$-algebra, $A$ (see for example \cite{Wal}). A cycle in the theory is a triple, $(M,E,f)$, where $M$ is a smooth, compact, ${\rm spin^c}$-manifold, $E$ is a locally trivial bundle over $M$ with finitely generated, projective Hilbert $A$-modules as fibers (we refer to such an object as an $A$-vector bundle), and $f:M \rightarrow X$ is a continuous function. An abelian group denoted by $K_*(X;A)$ is defined using these cycles modulo an equivalence relation which is geometrically defined. The group, $K_*(X;A)$, is via an explicit map isomorphic to $KK^*(C(X),A)$. When $X=pt$, the isomorphism is given by the map
\begin{equation} \label{isoInCaseOfPt}
(M,E) \in K_*(pt;A) \mapsto {\rm ind_{AS}}(D_{(M,E)}) \in K_*(A) \cong KK^*(\field{C}, A) 
\end{equation}
where ${\rm ind_{AS}}(D_{(M,E)})$ denotes the higher index of the Dirac operator on $M$ twisted by the bundle $E$. Based on this idenification, we can view $K_*(pt;A)$ as a natural ``geometric" home for our indices. 
\par
The basic idea of our construction is to replace the manifold $M$ in the Baum-Douglas model with a $P$-manifold. In more detail, the construction is as follows. As input, we take a finite CW-complex, $X$, a smooth, compact, ${\rm spin^c}$ manifold, $P$, and unital $C^*$-algebra, $A$. A cycle, with respect to this input, is a triple, $((Q, \beta Q), (E_Q, E_{\beta Q}), f)$ where 
\begin{enumerate}
\item $(Q, \beta Q)$ is a smooth, compact, ${\rm spin^c}$ $P$-manifold; 
\item $(E_Q, E_{\beta Q})$ and $f: (Q,\beta Q) \rightarrow X$ are respectively the natural ``$P$-version" of an $A$-vector bundle and continuous function; the precise definitions of these objects are given in Definitions \ref{PCtsMapX}, \ref{PBundle}, and \ref{cycWithBun}.
\end{enumerate}
From these cycles, we construct an abelian group (see Definition \ref{GeoGroup}); it is denoted by $K_*(X;P;A)$. In the case of $P=k$-points and $A=\field{C}$, we obtain the geometric realization of $K$-homology with coefficients in $\zkz$ constructed in \cite{Dee1, Dee2}. Here, we consider the case of more general ``singularities" (i.e., choices of $P$). However, we do not reach full generality since for the proofs of the key properties of $K_*(X;P;A)$, we require $P$ to have a trivial stable normal bundle. This condition implies that there is a ``good" notion of normal bundle for $P$-manifolds.
\par
The main results in regards to fundamental properties of the geometric group are the generalized Bockstein sequence, Theorem \ref{BockTypeSeq}, and a ``uniqueness"  theorem, Theorem \ref{uniquenessThm}. These theorems should be compared respectively with Theorem 1.6 and Proposition 1.14 in \cite[Chapter VIII]{Rud}. Next, in Section 4, we define the assembly map. When the group is torsion-free, this map is the natural analog of the Baum-Connes assembly map. In particular, for a torsion-free group and non-singular manifolds, the equivalence of the definition of assembly considered here and the standard definition is well-known; a detailed proof is given in \cite{land}. It follows from the generalized Bockstein sequence and Five Lemma that the assembly map for $P$-manifolds is an isomorphism whenever the Baum-Connes assembly map is an isomorphism (see Theorem \ref{BCForPMfld}).
\par
As we mentioned above, the indices we construct are elements in $K_*(pt;P;A)$, which is defined via Baum-Douglas type cycles. Based on Equation \ref{isoInCaseOfPt}, one would like to identify this group with a more well-known group -- ideally, via an explicit map which is analogous to the higher index map discussed in Equation \ref{isoInCaseOfPt}. We give the full details of this construction in two examples. These are the case when $P$ is $k$-points and the circle. In these cases, we compute $K_*(pt;P;A)$ and relate the geometrically defined cycles with an analytic index. For $k$-points, the resulting index theorem (see Theorem \ref{higFMThm}) is a higher version of the Freed-Melrose index theorem; for the circle, the resulting index theorem is related to work of Rosenberg, \cite{Ros}. Based on the ``uniqueness" theorem (i.e., Theorem \ref{uniquenessThm}) mentioned above, the examples of $k$-points, the circle, and the $2$-sphere lead to the determination of $K_*(pt;P;A)$ for any choice of $P$ with trivial stable normal bundle and well-defined dimension. The final section of the paper contains a brief discussion of some generalizations.
\par
A summary of the notation introduced to this point is as follows: $X$ is a finite CW-complex, $K^*(X)$ is the K-theory of $X$, $K_*(X)$ is the geometric K-homology of $X$, $A$ is a unital $C^*$-algebra, $K_*(A)$ is the K-theory of $A$, $K_*(X;A)$ is the geometric K-homology of $X$ with coefficients in $A$, $K^*(X;A):=K_*(C(X)\otimes A)$, and $P$ is a smooth, compact, ${\rm spin^c}$-manifold with trivial stable normal bundle. It is convenient to assume that $P$ has a well-defined dimension mod two; that is, we assume the dimensions of the connected components of $P$ are all the same dimension modulo two. An $A$-vector bundle over $X$ is a locally trivial bundle over $X$ with finitely generated, projective Hilbert $A$-modules as fibers. Also, $\Gamma$ is a finitely generated discrete group, $B\Gamma$ is the classifying space of $\Gamma$ (for simplicity, we assume it is a finite CW-complex), $C^*(\Gamma)$ is the reduced group $C^*$-algebra of $\Gamma$. Finally, if $M$ is a smooth, compact, ${\rm spin^c}$ manifold and $E$ is a smooth $A$-vector bundle over it, then $D_{(M,E)}$ denotes the Dirac operator associated to the ${\rm spin^c}$-structure of $M$ twisted by the $A$-vector bundle $E$. This operator has a well-defined higher index, which we denote by ${\rm ind_{AS}}(D_{(M,E)})$ or ${\rm ind}(D_{(M,E)})$; this index is an element of $ K_{{\rm dim}(M)}(A)$.

\section{Preliminaries}
\subsection{Manifolds with Baas-Sullivan singularities}
\begin{define}(see \cite{Baas}) \\
A closed, smooth, ${\rm spin^c}$-manifold with Baas-Sullivan singularities modeled on $P$ (i.e., a closed, smooth, ${\rm spin^c}$ $P$-manifold) is $(Q,\beta Q, \varphi)$ where
\begin{enumerate}
\item $Q$, a smooth, compact, ${\rm spin^c}$manifold with boundary;
\item $\beta Q$ a smooth, compact, ${\rm spin^c}$-manifold;
\item $\varphi: \beta Q \times P \rightarrow \partial Q$ a ${\rm spin^c}$-preserving diffeomorphism. 
\end{enumerate}
We denote such an object as $(Q,\beta Q, \varphi)$ or just as $(Q,\beta Q)$. \label{Pmfld}
\end{define}
There is a similar definition in the case of spin-manifolds and the constructions we give in this paper in the ${\rm spin^c}$-manifold case generalize in a natural way to the case of spin-manifolds and $KO$-theory. The choice to surpress the diffeomorphism, $\varphi$, from the notation causes an abuse of notation: often but not always, we will refer to the projection map $\beta Q \times P \rightarrow \beta Q$ when we should refer to the map $\partial Q \cong \beta Q \times P \rightarrow \beta Q$.
\begin{ex}
Two prototypical examples are $P= k$-points, in which case a $P$-manifold is exactly a $\zkz$-manifold, see any of \cite{Dee1, Fre, MS}) and $P=S^1$, see \cite{Ros}. In $KO$-theory another low-dimensional example is $P=S^1$ with its non-bounding spin-structure, again see \cite{Ros}. 
\end{ex}
\begin{define} \label{PmfldWithBound}
A ${\rm spin^c}$ $P$-manifold with boundary is a pair of  smooth, compact, ${\rm spin^c}$-manifolds with boundary $\bar{Q}$ and, $\beta \bar{Q}$ and a smooth embedding which respects the ${\rm spin^c}$-structures $\bar{\varphi}:\beta \bar{Q} \times P \hookrightarrow \partial \bar{Q}$. The boundary of $(\bar{Q}, \beta \bar{Q}, \bar{\varphi})$ is given by
$$(\partial \bar{Q} - {\rm int}(\varphi(\beta \bar{Q})), \partial \beta \bar{Q}, \bar{\varphi}|_{\partial \beta Q \times P}).$$
Suppressing the embedding from the notation, we have that $(Q, \beta Q)$ is the boundary of $(\bar{Q}, \beta \bar{Q})$ if
$$\partial \beta \bar{Q}  =   \beta Q \hbox{ and }\partial \bar{Q}  =   Q \cup_{\partial Q} \beta \bar{Q} \times P.$$
As in the definition of a closed $P$-manifold, we will usually suppress the embedding (i.e., $\bar{\varphi}$) from the notation.
\end{define}
\begin{define} \label{PCtsMapX}
A continuous map from a $P$-manifold, $(Q, \beta Q, \varphi)$, to a locally compact Hausdorff space (e.g., a finite CW-complex), $X$, is a pair of continuous maps, $f: Q \rightarrow X$ and $f_{\beta Q}: \beta Q \rightarrow X$, such that
$$f|_{\partial Q} = f_{\beta Q} \circ \pi \circ \varphi$$
where $\pi: \beta Q \times P \rightarrow \beta Q$ is the projection map. We will often denote $(f, f_{\beta Q})$ simply as $f$. 
\end{define}
\begin{define}
A continuous map from one $P$-manifold, $(Q, \beta Q, \varphi)$, to another, $(\tilde{Q}, \beta \tilde{Q}, \tilde{\varphi})$, is a pair of continuous maps, $f: Q \rightarrow \tilde{Q}$ and $f_{\beta Q}: \beta Q \rightarrow \beta \tilde{Q}$, such that
$$\tilde{\pi} \circ \tilde{\varphi} \circ f|_{\partial Q} = f_{\beta Q} \circ \pi \circ \varphi $$
where $\pi: \beta Q \times P \rightarrow \beta Q$ and $\tilde{\pi}: \beta \tilde{Q} \times P \rightarrow \beta \tilde{Q}$ are the natural projection maps.
\end{define}
\subsection{Bundles and $K$-theory}
\begin{define} \label{PBundle}
A $P$-vector bundle, $(V_Q,  V_{\beta Q}, \varphi)$ over a $P$-manifold, $(Q, \beta Q, \varphi)$, is a triple, $(V_{Q}, V_{\beta Q}, \alpha)$, where
\begin{enumerate}
\item $V_Q$ is a vector bundle over $Q$;
\item $V_{\beta Q}$ is a vector bundle over $\beta Q$;
\item $\alpha: V_Q|_{\partial Q} \rightarrow \pi^*(V_{\beta Q})$ is a bundle isomorphism which is a lift of $\varphi$; we note that $\pi$ is the projection map $\beta Q \times P \rightarrow \beta Q$.
\end{enumerate}
We will usually denote such a triple simply as $(V_Q, V_{\beta Q})$ and refer to such objects as $P$-bundles.
\end{define}
Many definitons from the theory of vector bundles generalize to $P$-vector bundle: in particular, two $P$-vector bundles, $(V_Q,  V_{\beta Q}, \alpha)$ and $(V^{\prime}_Q,  V^{\prime}_{\beta Q}, \alpha^{\prime})$, over $(Q,\beta Q)$ are isomorphic if there exists vector bundle isomorphisms $\psi_Q : V_Q \rightarrow V^{\prime}_Q$ and $\psi_{\beta Q}: V_{\beta Q} \rightarrow V^{\prime}_{\beta Q}$ such the following diagram commutes:
\begin{center}
$\begin{CD}
(V_Q)|_{\partial Q} @>(\psi_Q)|_{\partial V}>> (V^{\prime}_Q)|_{\partial Q} \\
@V\alpha VV  @VV\alpha^{\prime} V \\
\pi^*(V_{\beta Q}) @>\tilde{\psi}_{\beta Q}>> \pi^*(V^{\prime}_{\beta Q})
\end{CD}$
\end{center}
\vspace{0.2cm}
where $\tilde{\psi}_{\beta Q}$ is the lift of the isomorphism (from $V_{\beta Q}$ to $V^{\prime}_{\beta Q}$) to an isomorhism from $\pi^*(V_{\beta Q})$ to $\pi^*(V^{\prime}_{\beta Q})$. There are similar definitions in the context of $P$-bundles for other standard definitions in vector bundle theory (e.g., pullback bundle, complementary bundle, stably isomorphic, etc).
\begin{prop}
Every $P$-bundle has a complementary $P$-bundle. Moreover, any two complementary $P$-bundles are stably isomorphic. \label{compBundle}
\end{prop}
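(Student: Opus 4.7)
The plan is to address existence and stable uniqueness separately, in each case reducing to the classical vector-bundle theorem while being careful to preserve the $P$-bundle gluing data.

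For existence, the strategy is to realize $(V_Q, V_{\beta Q}, \alpha)$ as a $P$-subbundle of a trivial $P$-bundle and then take orthogonal complements. First apply the classical complementary-bundle theorem on the compact manifold $\beta Q$ to obtain $W^0_{\beta Q}$ together with a trivialization $V_{\beta Q}\oplus W^0_{\beta Q}\cong \beta Q\times\mathbb{C}^{K_1}$. Pulling back along $\pi$ and using $\alpha$ (together with the identification $\partial Q\cong\beta Q\times P$ supplied by $\varphi$) produces an embedding of $V_Q|_{\partial Q}$ into $\partial Q\times\mathbb{C}^{K_1}$ whose orthogonal complement, with respect to a fixed Hermitian metric, is $\pi^*(W^0_{\beta Q})$. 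I would then extend this to an embedding $V_Q\hookrightarrow Q\times\mathbb{C}^{K_1+K_2}$ for some $K_2$, arranged so that on $\partial Q$ it agrees with the boundary embedding composed with the inclusion $\mathbb{C}^{K_1}\hookrightarrow\mathbb{C}^{K_1+K_2}$ onto the first $K_1$ coordinates. Taking $W_Q$ to be the orthogonal complement of $V_Q$ in $Q\times\mathbb{C}^{K_1+K_2}$ and $W_{\beta Q} := W^0_{\beta Q}\oplus(\beta Q\times\mathbb{C}^{K_2})$, the gluing $\beta\colon W_Q|_{\partial Q}\to\pi^*(W_{\beta Q})$ is immediate from orthogonality, and $(V_Q\oplus W_Q, V_{\beta Q}\oplus W_{\beta Q}, \alpha\oplus\beta)$ is then a trivial $P$-bundle of rank $K_1+K_2$.

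The main obstacle is the extension step. An embedding of $V_Q|_{\partial Q}$ into $\partial Q\times\mathbb{C}^{K_1}$ is equivalent to $K_1$ sections of $V_Q^*|_{\partial Q}$ that are fiberwise linearly independent. Each of these extends to a smooth section of $V_Q^*$ over $Q$ using a collar of $\partial Q$ and a partition of unity (the inclusion $\partial Q\hookrightarrow Q$ is a cofibration and $V_Q^*$ is a smooth bundle), although the extensions will typically fail to span $V_Q^*$ in the interior. One then adjoins finitely many additional sections $t_1,\ldots,t_{K_2}$ of $V_Q^*$ supported inside $Q\setminus\partial Q$, manufactured from local trivializations multiplied by bump functions vanishing near $\partial Q$, until the combined family is fiberwise generating everywhere on $Q$. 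Because the adjoined sections vanish on the boundary, the resulting embedding restricts on $\partial Q$ to the required block form, and the orthogonal construction then delivers a $P$-bundle automatically.

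Stable uniqueness follows from a standard swindle carried out entirely in the category of $P$-bundles. If $(W_Q, W_{\beta Q})$ and $(W'_Q, W'_{\beta Q})$ are two complements with $V\oplus W$ and $V\oplus W'$ trivial $P$-bundles of ranks $K$ and $K'$ respectively, then associativity of the componentwise direct sum gives
\[
W\oplus(V\oplus W') \cong W\oplus(\mathrm{trivial})_{K'} \quad\text{and}\quad (W\oplus V)\oplus W' \cong (\mathrm{trivial})_{K}\oplus W',
\]
so $W\oplus(\mathrm{trivial})_{K'}\cong W'\oplus(\mathrm{trivial})_{K}$ as $P$-bundles. Each isomorphism in this chain is compatible on the boundary because it is constructed componentwise from the given gluings $\alpha,\beta,\beta'$, proving stable isomorphism of any two complementary $P$-bundles.
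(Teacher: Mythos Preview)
Your proof is correct, and your stable-uniqueness argument is essentially identical to the paper's. For existence, however, you take a different route. You build the complement constructively: first embed $V_{\beta Q}$ into a trivial bundle on $\beta Q$, pull back to an embedding of $V_Q|_{\partial Q}$, and then carefully extend that embedding over $Q$ by adjoining extra sections supported away from the boundary. The paper instead takes \emph{arbitrary} complements $W$ of $V_Q$ on $Q$ and $\tilde{W}$ of $V_{\beta Q}$ on $\beta Q$ (both into trivial bundles of the same rank $n$), and then observes that $W|_{\partial Q}$ and $\pi^*(\tilde{W})$ are both complements of $\pi^*(V_{\beta Q})$ in $\partial Q\times\mathbb{C}^n$; the same swindle used for stable uniqueness then gives $W|_{\partial Q}\oplus(\partial Q\times\mathbb{C}^n)\cong \pi^*(\tilde{W}\oplus\beta Q\times\mathbb{C}^n)$, so that $(W\oplus Q\times\mathbb{C}^n,\ \tilde{W}\oplus\beta Q\times\mathbb{C}^n)$ is already a $P$-bundle. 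This is shorter and avoids the extension argument entirely, at the cost of producing a complement that is stabilized by an extra trivial summand. Your approach has the virtue of yielding a complement of minimal possible rank and making the compatibility with $\alpha$ transparent.
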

\begin{proof}
The result is certainly known; we give a proof in the case of complex $P$-bundles. Let $(V_Q, V_{\beta Q})$ be a $P$-bundle and $\pi: \beta Q \times P \rightarrow \beta Q$ be the projection map. Then $V_Q$ has a complementary vector bundle (say $W$); that is, for some $n\in \field{Z}$, we have $V_Q \oplus W \cong Q \times \field{C}^n$.  In particular, $V_Q |_{\partial Q} \oplus W|_{\partial Q} \cong \partial Q \times \field{C}^n$. The definition of $P$-bundle implies that
$$\pi^*(V_{\beta Q}) \oplus W|_{\partial Q} \cong \pi^*(\beta Q \times \field{C}^n).$$
Let $\tilde{W}$ be a vector bundle over $\beta Q$ such that $V_{\beta Q} \oplus \tilde{W} \cong \beta Q \times \field{C}^n$. Then
$$W|_{\partial Q} \oplus \partial Q \times \field{C}^n \cong \pi^*(\tilde{W} \oplus \beta Q \times \field{C}^n).$$
In other words, $V^c=W \oplus Q \times \field{C}^n$ can be given the structure of a $P$-vector bundle; it is also clear that $(V^c,\tilde{W} \oplus \beta Q \times \field{C}^n)$ is a complementary $P$-bundle for $(V_Q, V_{\beta Q})$. 
\par
Finally, let $(W_Q, W_{\beta Q})$ and $(\tilde{W}_Q, \tilde{W}_{\beta Q})$ be two complementary bundle for $(V_Q, V_{\beta Q})$. Then, for some integers $n_1$ and $n_2$, we have
$$(W_Q, W_{\beta Q}) \oplus Q \times \field{C}^{n_1} \cong (W_Q, W_{\beta Q}) \oplus (V_Q, V_{\beta Q}) \oplus (\tilde{W}_Q, \tilde{W}_{\beta Q}) \cong Q\times \field{C}^{n_2}\oplus (\tilde{W}_Q, \tilde{W}_{\beta Q}).$$
\end{proof}
\begin{ex} 
If $(Q, \beta Q)$ is a $P$-manifold, then the tangent bundle of $Q$ as a manifold with boundary is {\it not} a $P$-bundle over $(Q, \beta Q)$. In certain cases, there is a way of working around this issue. In increasing generality, we have the following examples:
\begin{enumerate} 
\item If $(Q,\beta Q)$ is a $\zkz$-manifold, then $(TQ, T(\beta Q \times (0,1])$ can be given the structure of a $\zkz$-bundle.
\item If $P$ is a manifold with trivial tangent bundle, then there exists $n\in \field{N}$ such that $(TQ, T(\beta Q) \oplus \beta Q \times \field{R}^n)$ can be given the structure of a $P$-bundle.
\item If $P$ is a manifold with trivial stable normal bundle, then there exists $n_1$ and $n_2$ such that $(TQ \oplus Q \times \field{R}^{n_1}, T(\beta Q) \oplus \beta Q\times \field{R}^{n_2})$ can be given the structure of a $P$-bundle.
\end{enumerate} 
\end{ex}
\begin{define}
Let $P$ be a manifold with trivial stable normal bundle and $(Q, \beta Q)$ be a $P$-manifold. Then, a $P$-normal bundle for $(Q, \beta Q)$ is a complementary $P$-bundle to $(TQ \oplus Q \times \field{R}^{n_1}, T(\beta Q) \oplus \beta Q\times \field{R}^{n_2})$ where $n_1$ and $n_2$ are as in the previous example. In particular, a normal bundle for a manifold $M$ is a complementary vector bundle of its tangent bundle. 
\end{define}
\begin{remark} \label{norBunCom}
Proposition \ref{compBundle} implies if $P$ has trivial stable normal bundle, then that every $P$-manifold has a $P$-normal bundle and any two $P$-normal bundles are stably isomorphic. 
\end{remark}
\begin{define} 
Let $A$ be a unital $C^*$-algebra. An $A$-vector bundle over $X$ is a locally trivial bundle with finitely generated, projective Hilbert $A$-modules as fibers. An $A$-vector bundle (or more correctly an ``($A$, $P$)-vector bundle") over a $P$-manifold is defined in the same way as in a $P$-vector bundle only one replaces the vector bundles in that definition with $A$-vector bundles. 
\end{define}
\begin{ex} \label{MishBunEx}
Let $\Gamma$ be a discrete group, $B \Gamma$ be its classifying space, $(Q, \beta Q)$ be a $P$-manifold, and $f: (Q, \beta Q) \rightarrow B\Gamma$ be a continuous map. The Mishchenko bundle is given by $\mathcal{L}_{B\Gamma} := E\Gamma \times_{\Gamma} C^*(\Gamma)$. Then, 
$$ V_{Q} = f^*(\mathcal{L}_{B \Gamma}) \hbox{ and } V_{\beta Q} = (f_{\beta Q})^*(\mathcal{L}_{B\Gamma})$$
can be given the structure of a $C^*(\Gamma)$-bundle over $(Q,\beta Q)$. In fact, the pullback along any continuous function $f: (Q,\beta Q) \rightarrow X$ of any $A$-vector bundle over $X$ is an $A$-vector bundle over $(Q, \beta Q)$. 
\end{ex}
\begin{define}
Let $K^0(Q,\beta Q; P;A)$ be the Grothendieck group of the semi-group of isomorphism classes of ($P$,$A$)-vector bundles over $(Q,\beta Q)$.
\end{define}
The group $K^0(Q,\beta Q; P;A)$ shares many properties with standard K-theory. For example, it has similar functorial properties and there is a Thom isomorphism. 
\begin{prop} \label{KthProp}
The following sequence is exact in the middle:
$$K^0(Q,\beta Q; P; A) \rightarrow K^0(Q;A) \oplus K^0(\beta Q;A) \rightarrow K^0(\beta Q\times P; A) $$
where the maps are defined at the level of bundle data to $K$-classes as follows:
\begin{enumerate}
\item $(V_Q,V_{\beta Q}) \in K^0(Q,\beta Q;P;A) \mapsto ([V_Q],[V_{\beta Q}])\in  K^0(Q;A) \oplus K^0(\beta Q;A)$;
\item $ (E_1,E_2)\in  K^0(Q;A) \oplus K^0(\beta Q;A) \mapsto [E_1 |_{\partial Q}] - [\pi^*(E_2)]$.
\end{enumerate}
where $\pi: \beta Q \times P \rightarrow \beta Q$ is the projection map.
\end{prop}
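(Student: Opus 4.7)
The plan is to verify the two inclusions separately. The easy half, that the composition is zero, is built into the definition of a ($P$,$A$)-vector bundle: if $(V_Q,V_{\beta Q},\alpha)$ is a cycle, then $\alpha$ itself is a bundle isomorphism $V_Q|_{\partial Q} \cong \pi^*V_{\beta Q}$, so $[V_Q|_{\partial Q}] - [\pi^*V_{\beta Q}] = 0$ in $K^0(\beta Q \times P;A)$, and this extends to the Grothendieck group by linearity.

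For the reverse inclusion, the proof is a clutching construction. Given $(x,y)$ in the kernel, write $x = [E_1] - [E_1']$ and $y = [E_2] - [E_2']$ as formal differences of honest $A$-vector bundles; by replacing $E_1'$ and $E_2'$ with their complements inside trivial bundles, we can absorb them into the stabilization step and assume $(x,y) = ([E_1],[E_2])$. The hypothesis then reads $[E_1|_{\partial Q}] = [\pi^*E_2]$ in $K^0(\beta Q\times P;A)$, so a stabilization argument produces an integer $n$ and a bundle isomorphism
$$\widehat{\alpha} : E_1|_{\partial Q} \oplus \partial Q \times A^n \longrightarrow \pi^*(E_2 \oplus \beta Q \times A^n)$$
covering the identification $\partial Q \cong \beta Q \times P$ given by $\varphi$. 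The key observation is that this $\widehat{\alpha}$ is exactly the clutching datum demanded by Definition~\ref{PBundle}, so the triple $\mathcal{V} := (E_1 \oplus Q \times A^n,\ E_2 \oplus \beta Q \times A^n,\ \widehat{\alpha})$ is a ($P$,$A$)-vector bundle over $(Q,\beta Q)$.

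Letting $\mathcal{T}_n := (Q \times A^n,\ \beta Q \times A^n,\ \mathrm{id})$ denote the trivial ($P$,$A$)-bundle of rank $n$, the difference $[\mathcal{V}] - [\mathcal{T}_n] \in K^0(Q,\beta Q;P;A)$ maps under the first arrow to $([E_1],[E_2]) = (x,y)$, establishing the desired inclusion. The main subtlety is the reduction from formal differences in $K^0$ to honest bundle-level data: it uses the existence of complements for both ordinary $A$-vector bundles (standard, since finitely generated projective Hilbert $A$-modules are direct summands of free modules) and for ($P$,$A$)-bundles (Proposition~\ref{compBundle}), and requires choosing $n$ large enough to simultaneously trivialize all the complements that appear.
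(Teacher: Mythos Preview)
Your argument is correct and follows essentially the same route as the paper's proof: reduce the negative parts of the formal differences to trivial bundles, stabilize to obtain an honest bundle isomorphism over $\partial Q$, and read off the preimage as a difference of explicit $(P,A)$-bundles. The paper keeps two possibly different trivial ranks $\varepsilon_Q$ and $\varepsilon'_Q$ rather than a single $A^n$, but this is only bookkeeping; note also that your appeal to Proposition~\ref{compBundle} is not actually needed here, since only complements of ordinary $A$-bundles are used.
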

\begin{proof}
The proof is standard; we give the proof for $P$-vector bundles, but the details generalize to the case of $A$-vector bundles. The isomorphism in the definition of a $P$-vector bundle implies that the composition of the two maps is zero. Thus, to show exactness, we need to show that if
$$([E]-[E^{\prime}], [F] - [F^{\prime}]) \in  K^0(Q) \oplus K^0(\beta Q) \mapsto 0 \in K^0(\beta Q \times P),$$
then there exists element in $K^0(Q,\beta Q;P)$ which maps to $([E]-[E^{\prime}], [F] - [F^{\prime}])$. \par
We can assume that $E^{\prime}$ and $F^{\prime}$ are trivial. Using this fact, the assumption that $([E]-[E^{\prime}], [F] - [F^{\prime}])  \mapsto 0$, and basic bundle theory, there exists trivial bundles over $Q$, $\varepsilon_Q$ and $\varepsilon^{\prime}_Q$, and bundle isomorphism
$$ \alpha: (E \oplus \varepsilon_Q)|_{\partial Q} \cong \pi^*(F)\oplus (\varepsilon^{\prime}_Q)|_{\partial Q}. $$
Let $\varepsilon_{\beta Q}$ respectively, $\varepsilon^{\prime}_{\beta Q}$ be the trivial vector bundle over $\beta Q$ of the same rank as $\varepsilon_Q$ respectively, $\varepsilon^{\prime}_Q$. One then checks that the desired class (i.e., the required preimage) is given by
$$[( E \oplus \varepsilon_Q,  F\oplus \varepsilon^{\prime}_{\beta Q}, \alpha)] - [(\varepsilon_Q \oplus \varepsilon^{\prime}_Q, \varepsilon_{\beta Q} \oplus \varepsilon^{\prime}_{\beta Q}, id)].
$$
\end{proof}
Given $\xi \in K^0(Q, \beta Q;P, A)$, we denote by $\xi_{Q}$ or $\xi|_{Q}$ (resp. $\xi_{\beta Q}$ or $\xi|_{\beta Q}$) the image of $\xi$ under the map to $K^0(Q;A)$ (resp. $K^0(\beta Q;A)$); we use similar notation for $K$-theory classes of $P$-manifolds with boundary. A number of proofs require a $K$-theory group that is rather similar to $K^0(Q, \beta Q;P, A)$:
\begin{define}
Let $(Q, \beta Q)$ be $P$-manifold and $W$ be a manifold with boundary such that $\partial W = \beta Q$. Furthermore, let 
$$K^0(Q \cup_{\partial Q} W \times P, W;A)$$ be the Grothendieck group of isomorphism classes of triples of the form
$$(E_{Q\cup_{\partial Q} W\times P}, E_{W}, \varphi) $$
where
\begin{enumerate}
\item $E_{Q\cup_{\partial Q} W\times P}$ is an $A$-vector bundle over $Q\cup_{\partial Q} W\times P$;
\item $E_{W}$ is an $A$-vector bundle over $W$;
\item $\varphi: E_{Q\cup_{\partial Q} W\times P}|_{W\times P} \rightarrow \pi^*(E_W)$ is a bundle isomorphism. We note that $\pi$ is the projection map $W\times P \rightarrow P$.
\end{enumerate}
\end{define}
The proof of the next proposition is similar to the proof of Proposition \ref{KthProp} and is omitted.
\begin{prop}  \label{gluePmfldWithbound}
Using the notation of the previous definition, we have that the following sequence is exact in the middle:
$$K^0(Q \cup_{\partial Q} W \times P, W;A) \rightarrow K^0(Q, \beta Q;A)\oplus K^0(W;A) \rightarrow K^0(\beta Q;A), $$
where the maps are the natural maps on K-theory induced from 
\begin{eqnarray*}
(E_{Q\cup_{\partial Q}W \times P}, E_W) & \mapsto & ( (E_{Q\cup_{\partial Q}W \times P})|_Q, E_W|_{\partial W=\beta Q}), E_W) \\
((F_Q, F_{\beta Q}), V_W) & \mapsto & [V_{W|_{\partial W=\beta Q}}]-[F_{\beta Q}]
\end{eqnarray*}
\end{prop}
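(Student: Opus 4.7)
The plan is to mirror the proof of Proposition \ref{KthProp}. As a first step, I would check that the composition of the two maps vanishes at the level of representatives: given $(E_{Q\cup_{\partial Q}W\times P}, E_W, \varphi)$, the first map outputs a pair whose first coordinate $(E_{Q\cup_{\partial Q}W\times P}|_Q,\, E_W|_{\beta Q})$ carries a $P$-bundle structure induced by the restriction of $\varphi$ to $\partial Q = \beta Q\times P$; applying the second map to this pair then produces $[E_W|_{\beta Q}]-[E_W|_{\beta Q}]=0$.

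For exactness in the middle, I would start with a class $((F_Q, F_{\beta Q}), V_W)$ mapping to zero, that is, $[V_W|_{\beta Q}] = [F_{\beta Q}]$ in $K^0(\beta Q; A)$. A standard stabilization argument (adding matched trivial $A$-summands to $F_{\beta Q}$ and $V_W$, with corresponding trivial $P$-bundle summands on $(F_Q, F_{\beta Q})$ whose clutching data is the tautological pullback identification) reduces to the case that there is an honest bundle isomorphism $\beta\colon V_W|_{\beta Q}\xrightarrow{\cong} F_{\beta Q}$. Composing $\pi^*(\beta)$ with the $P$-bundle clutching $\alpha\colon F_Q|_{\partial Q}\xrightarrow{\cong}\pi^*(F_{\beta Q})$ yields an identification $F_Q|_{\partial Q}\xrightarrow{\cong}\pi^*(V_W)|_{\partial W\times P}$. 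Using this as gluing data, I would paste $F_Q$ on $Q$ to $\pi^*(V_W)$ on $W\times P$ to obtain an $A$-vector bundle $E_{Q\cup_{\partial Q}W\times P}$ on $Q\cup_{\partial Q}W\times P$; together with $E_W := V_W$ and $\varphi := \mathrm{id}_{\pi^*(V_W)}$, this produces a triple in $K^0(Q\cup_{\partial Q}W\times P, W;A)$ which, after subtracting the trivially-glued correction class needed to offset the stabilization, maps to $((F_Q, F_{\beta Q}), V_W)$ under the first map.

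The main obstacle will be the bookkeeping during the stabilization step: trivial summands introduced on $V_W$ or $F_{\beta Q}$ must be matched by $P$-bundle-compatible trivial summands on $(F_Q, F_{\beta Q})$, so one cannot simply add arbitrary trivial bundles on $Q$; they must be of a form whose restriction to $\partial Q$ factors through $\pi^*$. As in the proof of Proposition \ref{KthProp}, this is handled by writing the final preimage as the difference of the glued class and a trivially-glued correction class, with the trivial summands on each piece chosen so that they cancel on applying the first map. Once the stabilization is carefully tracked, the rest of the argument is formal and follows the template of Proposition \ref{KthProp} verbatim.
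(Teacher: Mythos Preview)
Your proposal is correct and follows exactly the approach the paper intends: the paper omits the proof entirely, stating only that it ``is similar to the proof of Proposition~\ref{KthProp}'', and your outline reproduces that template faithfully, including the stabilization-and-correction-class bookkeeping.
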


\section{Geometric models} \label{geoModSec}
\begin{define} \label{cycWithBun}
A geometric cycle with bundle data, over $X$ with respect to $P$ and $A$ is a triple $((Q,\beta Q), (E_Q, E_{\beta Q}), f)$ where
\begin{enumerate}
\item $(Q,\beta Q)$ is a compact, smooth, ${\rm spin^c}$ $P$-manifold;
\item $(E_Q, E_{\beta Q})$ is a smooth $A$-vector bundle over $(Q, \beta Q)$;
\item $f$ is a continuous map from $(Q,\beta Q)$ to $X$.
\end{enumerate}
\end{define}
\begin{define} \label{cycWithKth}
A geometric cycle with $K$-theory data, over $X$ with respect to $P$ and $A$ is a triple $((Q,\beta Q), \xi , f)$ where
\begin{enumerate}
\item $(Q,\beta Q)$ is a compact, smooth, ${\rm spin^c}$ $P$-manifold;
\item $\xi \in K^0(Q,\beta Q; P; A)$;
\item $f$ is a continuous map from $(Q,\beta Q)$ to $X$.
\end{enumerate}
\end{define}
As in the case of the Baum-Douglas model for $K$-homology, $(Q,\beta Q)$ need not be connected and there is a natural $\field{Z}/2$-grading on cycles defined using the dimensions of the connected components of $(Q,\beta Q)$ modulo two. Furthermore, there is a natural notion of isomorphism for cycles; when we refer to a ``cycle", we mean ``an isomorphism class of a cycle". Addition of cycles is defined using disjoint union; we denote this operation by $\dot{\cup}$.
\begin{define} \label{borForKthGrp}
A bordism or a cycle with boundary with respect to $X$, $P$, and $A$ is $((\bar{Q}, \beta \bar{Q}), \bar{\xi}, \bar{f})$ where
\begin{enumerate}
\item $(\bar{Q},\beta \bar{Q})$ is a compact, smooth, ${\rm spin^c}$ $P$-manifold with boundary;
\item $\bar{\xi} \in K^0(\bar{Q}, \beta \bar{Q}; P;A)$;
\item $\bar{f}: (\bar{Q},\beta \bar{Q}) \rightarrow X$ is a continuous map.
\end{enumerate} 
The boundary of a bordism, $((\bar{Q}, \beta \bar{Q}), \bar{\xi}, \bar{f})$, is given by 
$$((\partial \bar{Q} - {\rm int}(\beta \bar{Q}), \partial \beta \bar{Q}), \bar{\xi}|_{\partial \bar{Q} - {\rm int}(\beta \bar{Q})}, \bar{f}|_{\partial \bar{Q} - {\rm int}(\beta \bar{Q})}).$$
By construction, it is a cycle in the sense of Definition \ref{cycWithKth}.
\end{define}
\begin{define}
Let $((Q,\beta Q), \xi , f)$ be a cycle and $(V_Q,V_{\beta Q})$ a ${\rm spin^c}$ $P$-vector bundle with even dimensional fibers over $(Q,\beta Q)$. We define the vector bundle modification of $((Q,\beta Q), \xi , f)$ by $(V_Q,V_{\beta Q})$ to be 
$$( (S(V_Q\oplus {\bf 1}),S(V_{\beta Q}\oplus {\bf 1})), \pi^*(\xi) \otimes \beta, f\circ \pi) $$
where
\begin{enumerate}
\item ${\bf 1}$ denotes the trivial real line bundle;
\item $\pi$ the vector bundle projection;
\item $\beta$ is the Bott class (see for example \cite[Section 2.5]{Rav}).
\end{enumerate}
We denote the cycle so obtained by $((Q,\beta Q), \xi , f)^{(V_Q,V_{\beta Q})}$; the reader should verify that the resulting triple is a cycle (in the sense of Definiton \ref{cycWithKth}). We will also find it useful to denote the bundle we are modifying by simply as $V_Q$; however, we emphasize that one can only vector bundle modify by $P$-vector bundles, which are ${\rm spin^c}$ and have even dimensional fibers.
\end{define}
\begin{define} \label{GeoGroup}
Let $K_*(X;P;A):=\{(Q,\beta Q), \xi , f)\}/\sim$
where $\sim$ is the equivalence relation generated by bordism and vector bundle modification. We denote the bordism relation by $\sim_{bor}$.
\end{define}
The disjoint union operation gives $K_*(X;P;A)$ the structure of an abelian group. In a similar way, one can define $K_*(X;P;A)$ using cycles with ``bundle data" (see Definition \ref{cycWithBun}). To do so, one needs to use slightly different definitions of bordism and vector bundle modification and add a direct sum/disjoint union relation. The process is completely analogous to the difference between the original Baum-Douglas model \cite{BD} and the model discussed in \cite{Rav}. We will use the ``bundle data" model for the definition of the assembly map and the detailed discussion of the case $P=k$-points and $P=S^1$.
\par
We have the following maps:
\begin{enumerate}
\item $ \Phi_P : K_*(X;A) \rightarrow K_{*+{\rm dim}(P)}(X;A) $
defined at the level of cycles via
$$ \Phi(M,\xi,f)=(M\times P, \pi^*(\xi) , f\circ \pi)$$
where $\pi$ denotes the projection map from $M\times P$ to $M$.
\item $r_P : K_*(X;A) \rightarrow K_*(X;P;A) $
defined at the level of cycles via
$$ r(M, \xi, f) = ((M,\emptyset), \xi, f).$$
\item $\delta_P : K_*(X;P;A) \rightarrow K_{*-{\rm dim}(P)-1} (X;A)$ defined at the level of cycles via
$$\delta((Q,\beta Q), \xi, f) = (\beta Q, \xi|_{\beta Q}, f|_{\beta Q} ).$$
\end{enumerate}
\begin{define} \label{norBordism}
Let $((Q,\beta Q), \xi, f)$ and $((Q^{\prime}, \beta Q^{\prime}), \xi^{\prime}, f^{\prime})$ be two cycles in $K_*(X;P;A)$. Then, these two cycles are normally bordant if there exists normal $P$-bundles $N$ and $N^{\prime}$ such that $((Q,\beta Q), \xi, f)^N \sim_{bor} ((Q^{\prime}, \beta Q^{\prime}), \xi^{\prime}, f^{\prime})^{N^{\prime}}$. If this is the case, then we write 
$$((Q,\beta Q), \xi, f) \sim_{nor} ((Q^{\prime}, \beta Q^{\prime}), \xi^{\prime}, f^{\prime}).$$
We call this relation normal bordism.
\end{define}
The next two lemmas are similar, in both statement and proof, to lemmas in \cite[Sections 4.4 and 4.5]{Rav} (also see \cite[Section 2.2.1]{Dee1}, \cite[Section 4.2]{DeeGeoRelKhom}, and \cite[Section 3.1]{DeeGofSur}). The proofs are omitted.
\begin{lemma} \label{directSumVBM}
Let $((Q,\beta Q), \xi, f)$ be a cycle and $V_1$ and $V_2$ be even rank ${\rm spin^c}$ $P$-bundles over $(Q, \beta Q)$.  Then
$$ ((Q,\beta Q), \xi, f)^{(V_1\oplus V_2)} \sim_{bor}  (((Q,\beta Q), \xi, f)^{V_1})^{(p^*(V_2))} $$
where $p$ is the projection map from $S(V_1\oplus {\bf 1})$ to $Q$.
\end{lemma}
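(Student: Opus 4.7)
The plan is to adapt the classical Baum--Douglas bordism arguments, in the form given in \cite[Section 4.4]{Rav} and the generalizations in \cite[Section 2.2.1]{Dee1}, to the $P$-manifold setting. In the non-singular case, the required bordism between $M^{V_1\oplus V_2}$ and $(M^{V_1})^{p^*(V_2)}$ is a classical statement: both manifolds arise as sphere bundles which can be realized as different pieces of the boundary of a single disk-bundle-type construction over $M$, and the $K$-theory classes match once one uses multiplicativity of the Bott class and the identification of pullbacks. So the real task here is to verify that every step of this classical argument can be performed \emph{simultaneously} on $Q$ and on $\beta Q$ and in a way that respects the collar identification $\partial Q \cong \beta Q \times P$.

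Concretely, I would first recall (without proof) the non-singular bordism: given a ${\rm spin}^c$ manifold $M$ with two even-rank ${\rm spin}^c$ vector bundles $V_1, V_2$, one constructs a ${\rm spin}^c$ manifold with boundary $\bar{M}$ whose boundary is diffeomorphic to $S(V_1\oplus V_2\oplus \mathbf{1}) \sqcup -S(p^*(V_2)\oplus \mathbf{1})$, together with an $A$-vector bundle $\bar{\xi}$ on $\bar{M}$ restricting to the correct Bott-twisted pullbacks on the two boundary components, and with a natural continuous map to $X$. I would then apply this construction fiberwise, once to the pair $(Q, V_1|_Q, V_2|_Q)$, producing a bordism $\bar{Q}$, and once to the pair $(\beta Q, V_1|_{\beta Q}, V_2|_{\beta Q})$, producing a bordism $\beta\bar{Q}$.

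The key point is then to assemble $(\bar{Q}, \beta\bar{Q})$ into a $P$-manifold with boundary in the sense of Definition \ref{PmfldWithBound}. Because $(V_1, V_2)$ are $P$-bundles, their restrictions to $\partial Q$ are canonically identified via $\alpha_1, \alpha_2$ with pullbacks $\pi^*(V_i|_{\beta Q})$ along $\pi:\beta Q\times P\to \beta Q$. Consequently the classical bordism construction performed on $Q$, when restricted to $\partial Q$, is naturally isomorphic to the product of $P$ with the analogous construction performed on $\beta Q$. This is exactly the data needed to produce the required embedding $\beta\bar{Q}\times P\hookrightarrow \partial\bar{Q}$. The $K$-theory class $\bar{\xi}$, built from $\pi^*(\xi)$ and the Bott classes for $V_1\oplus V_2$ and for $V_1$ followed by $p^*(V_2)$, satisfies the compatibility of Definition \ref{PBundle} by the same pullback argument, and the map to $X$ descends via the composition with the relevant projections, matching $f\circ \pi$ on both boundary components.

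The main obstacle, as anticipated, is purely bookkeeping: checking that the bordism construction is compatible with the Baas--Sullivan boundary identification, and that the resulting $(\bar{Q}, \beta\bar{Q}, \bar{\xi}, \bar{f})$ really is a bordism in the sense of Definition \ref{borForKthGrp} whose boundary is the disjoint union of the two cycles in question. Once this is unwound, everything reduces to the non-singular bordism together with naturality of the Bott class under pullback; no new analytic or topological input is needed, which is why the author finds it safe to omit the proof and refer to the cited parallel arguments.
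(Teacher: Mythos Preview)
Your proposal is correct and is exactly the approach the paper has in mind: the paper omits the proof entirely, stating that it is similar in both statement and proof to the corresponding lemmas in \cite[Sections 4.4 and 4.5]{Rav} (and \cite[Section 2.2.1]{Dee1}, \cite[Section 4.2]{DeeGeoRelKhom}, \cite[Section 3.1]{DeeGofSur}), with the only new content being the bookkeeping needed to carry the classical disk/sphere-bundle bordism through the $P$-manifold structure. Your outline of performing the non-singular construction simultaneously on $Q$ and $\beta Q$ and using the $P$-bundle isomorphisms $\alpha_i$ to produce the embedding $\beta\bar{Q}\times P\hookrightarrow\partial\bar{Q}$ is precisely this bookkeeping.
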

\begin{lemma}
Normal bordism is equal to the equivalence relation generated from bordism and vector bundle modification on cycles with $K$-theory data.
\end{lemma}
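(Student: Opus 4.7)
Let $\sim$ denote the equivalence relation on cycles generated by $\sim_{bor}$ and vector bundle modification. The inclusion $\sim_{nor} \subseteq \sim$ is immediate from Definition \ref{norBordism}: a normal bordism between $(Q,\xi,f)$ and $(Q',\xi',f')$ is exhibited by two vector bundle modification steps together with a bordism between the results. The plan for the reverse inclusion is to verify that $\sim_{nor}$ is itself an equivalence relation which contains both bordism and vector bundle modification; the minimality of $\sim$ then forces $\sim \subseteq \sim_{nor}$.

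Reflexivity and symmetry of $\sim_{nor}$ are immediate (use a cylinder bordism on a fixed VBM, and swap the two sides). For transitivity, given normal bordisms from $(Q_1,\xi_1,f_1)$ to $(Q_2,\xi_2,f_2)$ via $(N_1,N_2)$ and from $(Q_2,\xi_2,f_2)$ to $(Q_3,\xi_3,f_3)$ via $(\tilde N_2, N_3)$, Remark \ref{norBunCom} yields an isomorphism $N_2 \oplus T \cong \tilde N_2 \oplus T$ after adding a trivial even-rank spin$^c$ $P$-bundle $T$; applying VBM by $p^*(T)$ to each of the two bordisms and invoking Lemma \ref{directSumVBM} rewrites both boundary cycles on the $(Q_2,\beta Q_2)$ side as a common cycle, and concatenation then produces a normal bordism from $(Q_1,\xi_1,f_1)$ to $(Q_3,\xi_3,f_3)$ (now with stabilized normal bundles $N_1 \oplus T$ and $N_3 \oplus T$). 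Containment of $\sim_{bor}$ in $\sim_{nor}$ is shown by choosing any $P$-normal bundle $\bar N$ for a given bordism $(\bar Q, \bar\xi, \bar f)$ and vector bundle modifying the entire bordism by $\bar N$ (stabilized for even rank); its restrictions $\bar N|_{Q_i}$ are $P$-normal bundles for the two boundary pieces, up to the trivial collar direction, which is absorbed by stable isomorphism.

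The main and most delicate step is that $\sim_{nor}$ contains vector bundle modification: for any even-rank spin$^c$ $P$-bundle $V$ over $(Q,\beta Q)$, I would establish $(Q,\xi,f) \sim_{nor} (Q,\xi,f)^V$ by a complementary-bundle trick. By Proposition \ref{compBundle}, choose a $P$-complement $V^c$ so that $V \oplus V^c$ is trivial of even rank with compatible spin$^c$ structure, and fix a $P$-normal bundle $N_Q$ for $(Q,\beta Q)$. Set $N := N_Q \oplus V \oplus V^c$ on the $Q$ side, which is stably isomorphic to $N_Q$ and hence a legitimate $P$-normal bundle; and set $N' := p^*(N_Q \oplus V^c)$ on $S(V \oplus {\bf 1})$, where $p$ is the bundle projection. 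The standard identity $T(S(V \oplus {\bf 1})) \oplus {\bf 1} \cong p^*(TQ \oplus V \oplus {\bf 1})$, combined with the stable triviality of $TQ \oplus N_Q$ and $V \oplus V^c$, shows that $T(S(V \oplus {\bf 1})) \oplus N'$ is stably trivial, so $N'$ is a $P$-normal bundle. Two applications of Lemma \ref{directSumVBM}, moving the $V$-summand to the front of $N$, then yield
\[
(Q,\xi,f)^N \sim_{bor} \bigl((Q,\xi,f)^V\bigr)^{N'},
\]
which is the desired normal bordism. The hard point throughout is keeping spin$^c$ structures, $P$-bundle structures at the singularity $\beta Q \times P \subset \partial Q$, and even-rank requirements coherent across all the direct sums and stabilizations; these are handled by working up to stable isomorphism and using the $P$-bundle structure on $V^c$ supplied by the proof of Proposition \ref{compBundle}.
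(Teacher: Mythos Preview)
Your proof plan is correct and follows precisely the standard argument that the paper invokes: the paper omits the proof outright, referring instead to \cite[Sections 4.4 and 4.5]{Rav} (and the analogous arguments in \cite{Dee1, DeeGeoRelKhom, DeeGofSur}), and your outline---showing $\sim_{nor}$ is an equivalence relation containing both generators, with the complementary-bundle trick for vector bundle modification via Lemma \ref{directSumVBM} and Proposition \ref{compBundle}---is exactly that argument adapted to the $P$-manifold setting. There is nothing to add.
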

\begin{cor}
A cycle $((Q,\beta Q), \xi, f)$ is trivial in $K_*(X;P;A)$ if and only if there exists normal $P$-bundle, $N$, such $((Q,\beta Q), \xi, f)^N$ is a boundary.
\end{cor}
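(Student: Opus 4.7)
The plan is to deduce the corollary directly from the two preceding lemmas, so the real work is already done. Specifically, the previous lemma asserts that the equivalence relation defining $K_*(X;P;A)$ (bordism plus vector bundle modification) is the same as normal bordism, $\sim_{nor}$. Combined with the observation that the empty cycle represents the zero class, triviality of a cycle becomes normal bordism to the empty cycle, which by Definition \ref{norBordism} is exactly what we need.

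For the forward direction, I assume $[((Q,\beta Q),\xi,f)] = 0$ in $K_*(X;P;A)$. Since the empty cycle represents $0$, the given cycle is related to $\emptyset$ by the equivalence relation generated by bordism and vector bundle modification; invoking the previous lemma, $((Q,\beta Q),\xi,f) \sim_{nor} \emptyset$. Unpacking Definition \ref{norBordism}, there exist $P$-normal bundles $N$ over $(Q,\beta Q)$ and $N'$ over the empty $P$-manifold with
$$((Q,\beta Q), \xi, f)^N \sim_{bor} \emptyset^{N'}.$$
I then observe that vector bundle modification of the empty cycle is empty (the sphere bundle construction applied to the empty $P$-bundle yields the empty $P$-manifold), so $((Q,\beta Q), \xi, f)^N$ is bordant to $\emptyset$, i.e., is the boundary of some bordism in the sense of Definition \ref{borForKthGrp}.

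For the converse, I suppose $((Q,\beta Q),\xi,f)^N$ is a boundary, hence bordant to $\emptyset$, hence trivial in $K_*(X;P;A)$. Since vector bundle modification is one of the generators of the equivalence relation, $[((Q,\beta Q),\xi,f)] = [((Q,\beta Q),\xi,f)^N] = 0$, completing the proof.

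There is no real obstacle here; the statement is essentially a repackaging of the previous lemma. The only small step that requires a sentence is the verification that $\emptyset^{N'} = \emptyset$, which is immediate from the definition of vector bundle modification. Everything else is a direct application of the identification of $\sim$ with $\sim_{nor}$ and of the fact that ``bounds'' and ``bordant to $\emptyset$'' are the same notion for cycles in this framework.
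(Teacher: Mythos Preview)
Your argument is correct and is exactly the intended deduction: the paper states this as a corollary with no proof, treating it as immediate from the preceding lemma identifying the equivalence relation with normal bordism. Your unpacking via $((Q,\beta Q),\xi,f)\sim_{nor}\emptyset$ and the observation $\emptyset^{N'}=\emptyset$ is precisely how one reads off the statement from that lemma.
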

\begin{theorem} \label{BockTypeSeq}
Let $X$ be a finite CW-complex, $A$ be a unital $C^*$-algebra, and $P$ be a smooth compact ${\rm spin^c}$-manifold that has trivial stable normal bundle and well-defined dimension mod two. Then, the following sequence is exact
\begin{center}
$\begin{CD}
K_0(X;A) @>\Phi>> K_{{\rm dim}(P)}(X;A) @>r>> K_{{\rm dim}(P)}(X;P;A) \\
@A\delta AA @. @V\delta VV \\
 K_{{\rm dim}(P)+1}(X;P;A) @<r<<  K_{{\rm dim}(P)+1}(X;A) @<\Phi<<  K_1(X;A)
\end{CD}$
\end{center}
where the maps were defined above, just before Definition \ref{norBordism}.
\end{theorem}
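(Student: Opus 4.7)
The plan is to verify the six exactness conditions by direct cycle manipulation, invoking the normal bordism corollary stated just above the theorem, Proposition \ref{gluePmfldWithbound}, and the trivial stable normal bundle hypothesis on $P$ only where needed.

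The three vanishing compositions are essentially immediate. For $r\circ\Phi$, the cycle $((M\times P,\emptyset),\pi^*\xi,f\circ\pi)$ bounds the $P$-manifold with boundary $(\bar Q,\beta\bar Q)=(M\times P\times[0,1], M)$, where $M$ is embedded as $M\times P\times\{0\}$ and carries the pulled-back bundle and map. The relation $\delta\circ r=0$ is immediate from the definitions. For $\Phi\circ\delta$ applied to $((Q,\beta Q),\xi,f)$, the Baum-Douglas cycle $(Q,E_Q,f)\in K_*(X;A)$ has boundary $(\partial Q,E_Q|_{\partial Q},f|_{\partial Q})\cong(\beta Q\times P,\pi^*E_{\beta Q},f_{\beta Q}\circ\pi)$, which is exactly $\Phi\circ\delta((Q,\beta Q),\xi,f)$.

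For $\ker r\subset\mathrm{im}\,\Phi$: if $((M,\emptyset),\xi,f)$ is null, the normal bordism corollary provides, after a VBM on $M$, a $P$-bordism $(\bar Q,\beta\bar Q,\bar\xi,\bar f)$ with $\partial\beta\bar Q=\emptyset$ and $\partial\bar Q=M\sqcup(\beta\bar Q\times P)$; viewing $\bar Q$ as a Baum-Douglas bordism in $K_*(X;A)$ identifies $(M,\xi,f)$ with $\Phi(\beta\bar Q,\bar\xi_{\beta\bar Q},\bar f_{\beta\bar Q})$. For $\ker\delta\subset\mathrm{im}\,r$: if $\delta((Q,\beta Q),\xi,f)$ vanishes, then after a compatible VBM by a normal $P$-bundle we may assume $(\beta Q,\xi_{\beta Q},f_{\beta Q})$ bounds a Baum-Douglas cycle $(W,\zeta,g)$ with $\partial W=\beta Q$, $\zeta|_{\partial W}=\xi_{\beta Q}$, $g|_{\partial W}=f_{\beta Q}$. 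Set $\tilde M:=Q\cup_{\partial Q}(W\times P)$; the matching of bundle data on $\beta Q$ lets Proposition \ref{gluePmfldWithbound} produce an $A$-bundle $\tilde E$ on $\tilde M$ restricting to $E_Q$ on $Q$ and to $\pi^*\zeta$ on $W\times P$, while $f$ and $g\circ\pi$ glue to $\tilde f\colon\tilde M\to X$. The $P$-manifold with boundary $(\tilde M\times[0,1],W)$, with $W\times P\hookrightarrow\tilde M\times\{0\}$ the canonical inclusion, then exhibits a $P$-bordism between $((Q,\beta Q),\xi,f)$ and (up to orientation) $r(\tilde M,\tilde E,\tilde f)$.

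For $\ker\Phi\subset\mathrm{im}\,\delta$: this is the step that crucially uses the trivial stable normal bundle hypothesis on $P$. Since $TP\oplus\field{R}^k$ is trivial for some $k$, any stable normal bundle of $M\times P$ is stably isomorphic to $\pi^*V_M$ for $V_M$ a stable normal bundle of $M$. Hence if $\Phi(M,\xi,f)$ is null in $K_{*+\dim P}(X;A)$, the normal bordism corollary produces a Baum-Douglas null-bordism $(W,\eta,g)$ of $\Phi((M,\xi,f)^{V_M})$. The pair $(W,M^{V_M})$ then carries a natural structure of closed $P$-manifold via $\partial W=M^{V_M}\times P$; the bundle and map data assemble into a $P$-cycle whose image under $\delta$ is $(M,\xi,f)^{V_M}\sim(M,\xi,f)$. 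The main obstacle is precisely this last step: upgrading a generic Baum-Douglas null-bordism of $M\times P$ to a $P$-manifold bordism demands that the VBM datum on $M\times P$ be pulled back from one on $M$, and this is exactly what the trivial stable normal bundle hypothesis on $P$ supplies.
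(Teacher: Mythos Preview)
Your proposal is correct and follows essentially the same strategy as the paper's proof: verify the compositions vanish by exhibiting explicit bordisms, then prove the three $\ker\subseteq\mathrm{im}$ inclusions via normal bordism and gluing constructions, with Proposition~\ref{gluePmfldWithbound} supplying the needed $K$-theory class on $Q\cup_{\partial Q}(W\times P)$. The paper gives full detail only for $\ker\delta\subset\mathrm{im}\,r$ and defers the other two inclusions to the analogous argument in \cite{Dee1}, whereas you spell out all three; your treatment of $\ker\Phi\subset\mathrm{im}\,\delta$ makes explicit the role of the trivial stable normal bundle hypothesis on $P$ (to force the normal bundle of $M\times P$ to be pulled back from $M$), which the paper leaves implicit in its reference to \cite{Dee1}.
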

Before beginning the proof, we note that if $P$ is $k$-points and $A=\field{C}$, then this theorem is exactly \cite[Theorem 2.20]{Dee1}. The proof is similar to the proof of that theorem; in particular, the notion of normal bordism plays a key role. 
\begin{proof}
We begin by noting that the proof that the maps are well-defined follows from the compatiblity of the relations used to defined the various groups. In addition, the notion of bordism in the various groups implies that the composition of successive maps is zero.
\par
These observations reduce the proof to showing 
$${\rm ker}(\delta)\subseteq {\rm im}(r), {\rm ker}(r)\subseteq {\rm im}(\Phi), {\rm ker}(\Phi)\subseteq {\rm im}(\delta). $$
Since the argument is very close to the proof of Theorem 2.20 in \cite{Dee1}, we only give a detailed proof that ${\rm ker}(\delta)\subseteq {\rm im}(r)$.
\par
Suppose that $((Q, \beta Q), \xi, f)$ is in ${\rm ker}(\delta)$. We begin by proving that $((Q, \beta Q), \xi, f)$ is equivalent to a cycle in $K_*(X;P;A)$ whose image under $\delta$ is a boundary rather than just trivial. Remark \ref{norBunCom} and \cite[Corollary 4.5.16]{Rav} imply that there exists normal $P$-bundle $(N_Q, N_{\beta Q})$ (for $(Q,\beta Q)$) such that
$(\beta Q, \xi|_{\beta Q}, f|_{\beta Q})^{N_{\beta Q}}$ is a boundary and $((Q, \beta Q), \xi, f)^{(N_Q, N_{\beta Q})}$ is equivalent via the vector bundle modification relation to the original cycle.
\par
Hence, without loss of generality, we can assume there exists bordism with respect to $K_{*-1}(X;A)$, $(W, \eta, g)$, such that $(\beta Q, \xi|_{\beta Q}, f|_{\beta Q}) = \partial (W, \eta, g)$. We form the cycle in $K_*(X;A)$: 
$$(Q \cup_{\partial Q} (W \times P), \xi_{Q} \cup \pi^*(\eta), f \cup g)$$
where $\pi: \beta Q \times P \rightarrow \beta Q$ is the projection map and the $K$-theory class is obtained as follows. Let $\nu$ to be any preimage of $(\xi_Q, \eta)$ in the sequence which, by Proposition \ref{gluePmfldWithbound}, is exact in the middle:
$$K^0(Q \cup_{\partial Q} W \times P, W;A) \rightarrow K^0(Q, \beta Q;A)\oplus K^0(W;A) \rightarrow K^0(\beta Q;A). $$
Then $ \xi_{Q} \cup \pi^*(\eta)$ is defined to be the image of $\nu$ under the natural map $K^0(Q \cup_{\partial Q} W, W;A) \rightarrow K^0(Q\cup_{\partial Q} W;A)$; this class in not unique, but any choice will satisfy the properties required in the rest of the proof.
\par
The proof will be complete upon showing that $r(Q \cup_{\partial Q} (W \times P), \xi_{Q} \cup \pi^*(\eta), f \cup g)$ is equivalent to $((Q, \beta Q), \xi, f)$. This follows from the following bordism with respect to the group $K_*(X;P;A)$:
$$((Q \cup_{\partial Q} (W \times P) \times [0,1], W\times \{0\}), \bar{\xi}, (f \cup g)\circ \tilde{\pi})$$
where $\tilde{\pi}$ is the projection map $(Q \cup_{\partial Q} (W \times P)) \times [0,1] \rightarrow (Q \cup_{\partial Q} (W \times P))$ and $\bar{\xi}$ is a $K$-theory class such that
$$ \bar{\xi}|_{Q \cup_{\partial Q} (W \times P)\times \{1\}} = \xi_{Q} \cup \pi^*(\eta) \hbox{ and }  \bar{\xi}|_{Q \cup_{\partial Q} (W \times P)\times \{0\}}= \nu.  $$
Such a class can be constructed by more or less pulling back the class $\nu$ discussed above.
\end{proof}
\begin{theorem} \label{uniquenessThm}
Let $X$ be a finite CW-complex, $A$ be a unital $C^*$-algebra, and $P$ and $P^{\prime}$ be smooth, compact, ${\rm spin^c}$-manifolds of dimensions $n$ and $n'$ respectively. Assume also that both $P$ and $P^{\prime}$ have trivial stable normal bundle. If $(P, [P\times \field{C}])\sim (P^{\prime}, [P^{\prime}\times \field{C}])$ as elements in $K_*(pt)$, then $K_*(X;P;A) \cong K_*(X;P^{\prime};A)$. Moreover, the isomorphism is natural with respect to both $X$ and $A$.
\end{theorem}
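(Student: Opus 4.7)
The plan is to apply the five-lemma to the Bockstein-type exact sequences of Theorem \ref{BockTypeSeq} for $P$ and for $P'$. Since $K_*(pt)$ vanishes in odd degree, the hypothesis $(P,[P\times\field{C}]) \sim (P',[P'\times\field{C}])$ forces $\dim(P) \equiv \dim(P') \pmod 2$, so the two six-term sequences live over identical outer groups once we work in $\field{Z}/2$-graded $K$-theory.

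I would first verify that $\Phi_P = \Phi_{P'}$ as maps $K_*(X;A) \to K_{*+\dim(P)}(X;A)$. Given any cycle $(M,\xi,f) \in K_*(X;A)$, the Baum--Douglas chain of bordisms and vector bundle modifications realising the assumed equivalence in $K_*(pt)$ can be crossed fibrewise with $(M,\xi,f)$ to yield an equivalence in $K_*(X;A)$ between $(M\times P, \pi^*\xi, f\circ\pi)$ and $(M\times P', \pi^*\xi, f\circ\pi)$. Hence the two external product maps coincide.

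The central and most substantive step is to construct a natural homomorphism $\Psi : K_*(X;P;A) \to K_*(X;P';A)$ satisfying $\delta_{P'}\circ\Psi = \delta_P$ and $\Psi\circ r_P = r_{P'}$. Recalling from Definition \ref{norBordism} and the subsequent lemma that normal bordism equals the equivalence relation generated by bordism and vector bundle modification on cycles, I would fix a single normal bordism witnessing $(P,[P\times\field{C}]) \sim (P',[P'\times\field{C}])$: ${\rm spin^c}$ vector bundles $V \to P$, $V' \to P'$ and a ${\rm spin^c}$ cobordism $(\bar{W}, E_{\bar{W}})$ between the vector bundle modifications $(S(V\oplus\mathbf{1}),\beta_V)$ and $(S(V'\oplus\mathbf{1}),\beta_{V'})$. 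For a $P$-cycle $((Q,\beta Q),\xi,f)$, the image $\Psi((Q,\beta Q),\xi,f)$ is built in three moves: a fibrewise vector bundle modification along the singular boundary by the pulled-back bundle $\beta Q \times V$, a gluing of $\beta Q \times \bar{W}$ across the resulting $\beta Q \times S(V\oplus\mathbf{1})$, and an inverse fibrewise vector bundle modification by $V'$. Bundle data is extended across the new pieces by $\pi^*(\xi_{\beta Q})$ twisted by the Bott classes and by $E_{\bar{W}}$, while the continuous map is extended by $f_{\beta Q}\circ\mathrm{pr}_{\beta Q}$.

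The main obstacle will be showing that $\Psi$ descends to equivalence classes and is independent of the normal bordism chosen; this reduces to a sequence of bordism-type compatibility checks using Proposition \ref{gluePmfldWithbound}, Lemma \ref{directSumVBM}, and Remark \ref{norBunCom} (which supplies complementary $P$-bundles). Once well-definedness is settled, commutativity of the five-lemma ladder is immediate from the construction: $\delta$ only reads $\beta Q$ and its bundle, which $\Psi$ leaves unchanged, while cycles in the image of $r_P$ have empty $\beta Q$, on which $\Psi$ acts as the identity. The five-lemma then yields that $\Psi$ is an isomorphism, and naturality in $X$ and $A$ is automatic because every step in the construction respects continuous maps $X \to X'$ and unital $*$-homomorphisms $A \to A'$.
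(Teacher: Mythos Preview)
Your overall strategy---fix a normal bordism realising the equivalence in $K_*(pt)$, use it to build an explicit $\Psi$, and then either invoke the Five Lemma on the Bockstein ladder or produce an explicit inverse---matches the paper's approach. The paper in fact does both: it constructs $\Psi_{(W,\nu)}$ directly and checks commutativity, but also notes that $\Psi_{(-W,\nu)}$ is an explicit inverse.

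The gap is in your description of $\Psi$ itself. You write that the first move is ``a fibrewise vector bundle modification along the singular boundary by the pulled-back bundle $\beta Q \times V$''. But $V$ is a bundle over $P$, so its pullback lives only over $\partial Q \cong \beta Q \times P$; there is no reason it extends over $Q$, and without such an extension you cannot modify $Q$ so that its new boundary is $\beta Q \times S(V\oplus\mathbf{1})$. Likewise the ``inverse fibrewise vector bundle modification by $V'$'' has no meaning: after gluing $\beta Q \times \bar{W}$ your boundary is $\beta Q \times S(V'\oplus\mathbf{1})$, which is a sphere bundle over $\beta Q \times P'$, not of the form $(\text{something})\times P'$, so you do not obtain a $P'$-manifold.

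This is precisely where the hypothesis of trivial stable normal bundle enters, and you never actually use it. The paper first argues that because $P$ and $P'$ have trivial stable normal bundles, the normal bundles $V$ and $V'$ in the witnessing normal bordism can be taken to be \emph{trivial}, and after a further argument (handling the possibility that $P$ is disconnected) of \emph{constant rank} $2n$ and $2n'$. Then $S(V\oplus\mathbf{1})=P\times S^{2n}$, and the ``modification along the boundary'' becomes simply multiplying \emph{all} of $Q$ by $S^{2n}$; one then glues $\beta Q\times W$ along $\partial Q\times S^{2n}\cong\beta Q\times\partial_0 W$ to obtain the $P'$-manifold $\bigl(Q\times S^{2n}\cup_{\partial Q\times S^{2n}}\beta Q\times W,\ \beta Q\times S^{2n'}\bigr)$. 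No ``inverse modification'' is needed: the new $\beta$-part is just $\beta Q\times S^{2n'}$. Without this reduction to trivial, constant-rank bundles your three-step recipe does not produce a well-defined cycle.
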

\begin{proof}
A number of projection maps are required in the proof; we use the following notation: let $M_1$ and $M_2$ be two manifolds possibly with boundary. Then, for $i=1$ and $2$, we let $\pi^{M_1\times M_2}_{M_i} : M_1 \times M_2 \rightarrow M_i$ denote the projection map.
\par
The equivalence relation on cycles in $K_*(pt)$ is equivalent to normal bordism \cite[Corollary 4.5.16]{Rav}. Hence, there exists normal bundles, $N$ and $N^{\prime}$, over respectively $P$ and $P^{\prime}$, such that
\begin{equation}
\label{norBorInUniProof}
(P, [P\times \field{C}])^{N} \sim_{bor} (P^{\prime}, [P^{\prime}\times \field{C}])^{N^{\prime}}.
\end{equation}
Moreover, since both $P$ and $P^{\prime}$ have trivial stable normal bundle and normal bundles are stably isomorphic, we can and will assume that both $N$ and $N^{\prime}$ are trivial bundles. However, since $P$ and $P^{\prime}$ are not necessarily connected, the ranks of these trivial bundles may vary over the connected components of $P$ and $P^{\prime}$. 

Our first goal is to show that we can take $N$ and $N^{\prime}$ each with constant rank. Decompose $P$ and $P^{\prime}$ as follows:
$$ P= P_1 \dot{\cup} \ldots \dot{\cup} P_k \hbox{ and }P^{\prime} = P^{\prime}_1 \dot{\cup} \ldots \dot{\cup} P^{\prime}_{k^{\prime}} $$
where 
\begin{enumerate}
\item ${\rm rank}(N|_{P_i})$ is constant for each $1 \le i \le k$;
\item ${\rm rank}(N^{\prime}|_{P^{\prime}_j})$ is constant for each $1 \le j \le k^{\prime}$;
\item ${\rm rank}(N_1) > {\rm rank}(N_2) > \ldots >{\rm rank}(N_k)$ where $N_i:=N|_{P_i}$;
\item ${\rm rank}(N^{\prime}_1) > {\rm rank}(N^{\prime}_2) > \ldots >{\rm rank}(N^{\prime}_{k^{\prime}})$ where $N^{\prime}_j:=N^{\prime}|_{P^{\prime}_j}$.
\end{enumerate}
The existence of the bordism in Equation \eqref{norBorInUniProof} and dimensional reasons imply that $k=k^{\prime}$ and that, for each $i=1, \ldots k$, 
$$ {\rm rank}(N_1) - {\rm rank}(N_i) = {\rm rank}(N^{\prime}_1) - {\rm rank}(N_i).$$
Furthermore, if $W$ is the manifold with boundary in a bordism from Equation \eqref{norBorInUniProof}, then 
$$W= W_1 \dot{\cup} \ldots \dot{\cup} W_k $$
where, for each $i=1, \ldots k$, $W_i$ is a bordism between $P_i^{N_i}$ and $(P^{\prime}_i)^{N^{\prime}_i}$.
\par
Let $V$ be the vector bundle over $P$ which is the trivial bundle of rank equal to ${\rm rank}(N_1)-{\rm rank}(N_i)$ on each $P_i$ where $i=1, \ldots, k$. Likewise, let $V^{\prime}$ be the vector bundle over $P^{\prime}$ which is the trivial bundle of rank equal to
$${\rm rank}(N_1)-{\rm rank}(N_i)={\rm rank}(N^{\prime}_1)-{\rm rank}(N^{\prime}_i)$$ on each $P^{\prime}_j$ where $j=1, \ldots, k$. Using the fact that trivial bundles of constant rank extend across the bordisms $W_1, \ldots W_k$ and \cite[Lemma 4.4.3]{Rav}, we obtain
\begin{eqnarray*}
 (P, [P\times \field{C}])^{N \oplus V} & \sim_{bor} & ((P, [P\times \field{C}])^N)^{\pi(V)} \\
&  \sim_{bor} & ((P^{\prime}, [P^{\prime}\times \field{C})^N)^{(\pi^{\prime})^*(V^{\prime})} \\
& \sim_{bor} & (P^{\prime}, [P^{\prime}\times \field{C}])^{N^{\prime} \oplus V^{\prime}} 
\end{eqnarray*}
where $\pi$ and $\pi^{\prime}$ are the projection maps in the vector bundle modifications of $P$ by $N$ and $P^{\prime}$ by $N^{\prime}$ respectively. Finally, by construction, the ranks of $N \oplus V$ and $N^{\prime} \oplus V^{\prime}$ are each constant. Thus, with loss of generality, we can assume that 
$$(P, [P\times \field{C}])^{N} \sim_{bor} (P^{\prime}, [P^{\prime}\times \field{C}])^{N^{\prime}}$$
where $N$ and $N^{\prime}$ are trivial bundles with constant rank.
\par
Let $2n$ (resp. $2n^{\prime}$) be the rank of $N$ (resp. $N^{\prime}$) and $\beta_{2n}$ (resp. $\beta_{2n^{\prime}}$) be the Bott class on $S^{2n}$ (resp. $S^{2n^{\prime}})$; the construction of the Bott class can be found in \cite[Section 2.5]{Rav}. Using this notation and the definition of vector bundle modification, we have that
$$(P \times S^{2n}, (\pi^{P\times S^{2n}}_{S^{2n}})^*(\beta_{2n})) \sim_{bor} (P^{\prime}\times S^{2n^{\prime}}, (\pi^{P^{\prime}\times S^{2n^{\prime}}}_{S^{2n^{\prime}}})^*(\beta_{2n^{\prime}}))$$
where, following the notation introduced at the start of the proof,
$$\pi^{P\times S^{2n}}_{S^{2n}}: P \times S^{2n} \rightarrow S^{2n} \hbox{ and } \pi^{P^{\prime}\times S^{2n^{\prime}}}_{S^{2n^{\prime}}}: P^{\prime}\times S^{2n^{\prime}} \rightarrow S^{2n^{\prime}}$$ are the projection maps. Let $(W, \nu)$ denote a fixed choice of bordism between these two cycles; we note that $W$ is a smooth, compact ${\rm spin^c}$-manifold with boundary and $\nu \in K^0(W)$. Let $\partial_0 W$ (resp. $\partial_1 W$) denote the component of $\partial W$ diffeomorphic to $P\times S^{2n}$ (resp. $P^{\prime}\times S^{2n^{\prime}}$). Moreover, we fix vector bundles $F$ and $\tilde{F}$ over $W$, $F_{S^{2n}}$ and $\tilde{F}_{S^{2n}}$ over $S^{2n}$, and $F^{\prime}_{S^{2n^{\prime}}}$ and $\tilde{F}^{\prime}_{S^{2n^{\prime}}}$ over $S^{2n^{\prime}}$ such that 
\begin{eqnarray}
\nu & = & [F]-[\tilde{F}],  \label{FixBunIsoFirst} \\
F|_{\partial_0 W}  \cong  (\pi^{P\times S^{2n}}_{S^{2n}})^*(F_{S^{2n}}) & \hbox{ and } & \tilde{F}|_{\partial_0 W}  \cong   (\pi^{P\times S^{2n}}_{S^{2n}})^*(\tilde{F}_{S^{2n}}), \label{FixBunIsoSecond} \\
F^{\prime}|_{\partial_1 W}  \cong  (\pi^{P^{\prime}\times S^{2n^{\prime}}}_{S^{2n^{\prime}}})^*(F^{\prime}_{S^{2n^{\prime}}}) & \hbox{ and } &\tilde{F}^{\prime}|_{\partial_1 W}  \cong (\pi^{P^{\prime}\times S^{2n^{\prime}}}_{S^{2n^{\prime}}})^*(\tilde{F}^{\prime}_{S^{2n^{\prime}}}).  \label{FixBunIsoLast}
\end{eqnarray}
\par
We use this data including the explicit choices of vector bundle isomorphisms in the previous equations to define the isomorphism; its definition is somewhat involved. 
\par
Let $((Q, \beta Q), \xi ,f)$ be a cycle in $K_*(X;P;A)$ and $(E_Q, E_{\beta Q})$ and $(\tilde{E}_Q, \tilde{E}_{\beta Q})$ be $A$-vector bundles over $(Q, \beta Q)$ such that $\xi=[(E_Q, E_{\beta Q})]-[(\tilde{E}_Q, \tilde{E}_{\beta Q})]$. 
\par
Using the decomposition 
$$\partial (\beta Q \times W)= (\beta Q \times \partial_0 W) \dot{\cup} (\beta Q \times \partial_1 W) \cong (\beta Q \times S^{2n}\times P) \dot{\cup} (\beta Q \times S^{2n^{\prime}} \times P^{\prime}).$$
form the $P^{\prime}$-manifold 
$$ (Q\times S^{2n} \cup_{\partial Q \times S^{2n}} \beta Q \times W, \beta Q \times S^{2n^{\prime}})$$
\par
The class in $K^0((Q\times S^{2n} \cup_{\partial Q \times S^{2n}} \beta Q \times W, \beta Q \times S^{2n^{\prime}}))$ is constructed as follows. For a cocycle $(E_Q, E_{\beta Q})$, we let
$$ \gamma(E_Q, E_{\beta Q}):=[(E^{\prime}, E^{\prime}_{\beta Q})] - [(\hat{E}^{\prime}, \hat{E}^{\prime}_{\beta Q})]$$
where
\begin{eqnarray*}
E^{\prime} & = & (\pi^{Q\times S^{2n}}_Q)^*(E_Q) \otimes (\pi^{Q\times S^{2n}}_{S^{2n}})^*(F_{S^n}) \cup_{P\times \beta Q \times S^{2n}} (\pi^{\beta Q \times W}_{\beta Q})^*(E_{\beta Q})\otimes (\pi^{\beta Q \times W}_W)^*(F) \\ 
E^{\prime}_{\beta Q} & = & (\pi^{\beta Q \times S^{2n^{\prime}}}_{\beta Q})^*(E_{\beta Q}) \otimes (\pi^{\beta Q \times S^{2n^{\prime}}}_{S^{2n^{\prime}}})^*(F^{\prime}_{S^{2n^{\prime}}}) \\
\hat{E}^{\prime} & = &  (\pi^{Q\times S^{2n}}_Q)^*(E_Q) \otimes (\pi^{Q\times S^{2n}}_{S^{2n}})^*(\tilde{F}_{S^n}) \cup_{P\times \beta Q \times S^{2n}} (\pi^{\beta Q \times W}_{\beta Q})^*(E_{\beta Q})\otimes (\pi^{\beta Q \times W}_W)^*(\tilde{F})  \\
\hat{E}^{\prime}_{\beta Q} & = &  (\pi^{\beta Q \times S^{2n^{\prime}}}_{\beta Q})^*(E_{\beta Q}) \otimes (\pi^{\beta Q \times S^{2n^{\prime}}}_{S^{2n^{\prime}}})^*(\tilde{F}^{\prime}_{S^{2n^{\prime}}})
\end{eqnarray*}
We make note of the use of the identification $\partial Q \times S^{2n} \cong P\times \beta Q \times S^{2n}\cong \partial_0 W \times \beta Q$ and that the required clutching isomorphisms are also included in the data (via the definition of $P$-bundle and the fixed vector bundle isomorphisms in Equations \ref{FixBunIsoSecond} and \ref{FixBunIsoLast}). Then, for $\xi=[(E_Q, E_{\beta Q})]-[(\tilde{E}_Q, \tilde{E}_{\beta Q})]$, we let
\begin{equation}
\xi^{\prime}:=\gamma(E_Q, E_{\beta Q})- \gamma(\tilde{E}_Q, \tilde{E}_{\beta Q})
\end{equation}
Using standard methods in topological $K$-theory, one checks that $\xi^{\prime} \in K^0 (Q\times S^{2n} \cup_{\partial Q \times S^{2n}} \beta Q \times W, \beta Q \times S^{2n^{\prime}})$ and $\xi^{\prime}$ do not depend on the choice of cocycles $(E_Q, E_{\beta Q})$ and $(\tilde{E}_Q, \tilde{E}_{\beta Q})$, but only on the class $\xi$ and, of course, on the choice of $W$, $F$, $\tilde{F}$, etc.
\par
Finally, the function, denoted by $g$, is defined to be $f$ on $Q$ and $f\circ \pi_W$ on $\beta Q \times W$.
\par
Let $\Psi_{(W,\nu)} : K_*(X;P;A) \rightarrow K_*(X;P^{\prime};A)$ be the map defined via the process just described; that is, 
$$\Psi_{(W,\nu)}((Q, \beta Q), \xi, f):=((Q\times S^{2n} \cup_{\partial Q \times S^{2n}} \beta Q \times W, \beta Q \times S^{2n^{\prime}}), \xi^{\prime}, g). $$
We must show that this map is well-defined. 
\par
Suppose that $((Q,\beta Q), \xi, f)$ is the boundary in the sense of Definition \ref{borForKthGrp} of the bordism $((\bar{Q}, \beta \bar{Q}), \bar{\xi}, \bar{f})$. Definitions \ref{PmfldWithBound} and \ref{borForKthGrp} imply that $\partial \beta \bar{Q}= \beta Q$; hence, we can form the closed, smooth, ${\rm spin^c}$ manifold, 
$$Q\times S^{2n} \cup_{\partial Q \times S^{2n}} \beta Q \times W \cup_{\beta Q \times S^{2n^{\prime}} \times P^{\prime}} \beta \bar{Q} \times S^{2n^{\prime}}\times P^{\prime}$$
There are also an associated K-theory class and function: For the K-theory class, we take 
$$ \xi^{\prime}|_{(Q\times S^{2n} \cup_{\partial Q \times S^{2n}} \beta Q \times W} \cup (\pi^{\beta \bar{Q} \times S^{2n^{\prime}}}_{\beta \bar{Q}})^*(\bar{\xi})|_{\beta \bar{Q}\times S^{2n^{\prime}}}$$
and, for the function, we take
$$ g \cup (\bar{f}|_{\beta \bar{Q}} \circ \pi^{\beta \bar{Q} \times S^{2n^{\prime}}}_{\beta \bar{Q}}).$$
Our goal is to show that this manifold is a boundary; we must also check that the bordism respects the K-theory class and continuous function. Definition \ref{PmfldWithBound} implies that $(Q \cup \beta \bar{Q}\times P)\times S^{2n}$ is a boundary. Moreover, the K-theory and function also respect this construction. Hence, it is a bordism in the sense of the Baum-Douglas model of K-homology. 
\par
Furthermore, we will show that
$$Q\times S^{2n} \cup_{\partial Q \times S^{2n}} \beta Q \times W \cup_{\beta Q \times S^{2n^{\prime}} \times P^{\prime}} \beta \bar{Q} \times S^{2n^{\prime}}\times P^{\prime} \dot{\cup} -((Q \cup \beta \bar{Q}\times P)\times S^{2n})$$ 
is bordant to
$$ (\beta \bar{Q}\times S^{2n}\times P) \cup_{\beta Q \times S^{2n} \times P}   (\beta Q \times W) \cup_{\beta Q \times S^{2n^{\prime}} \times P^{\prime}} (\beta \bar{Q} \times S^{2n^{\prime}}\times P^{\prime})$$
The bordism we have in mind is formed in a similar manner to the construction of the ``pair of pants" bordism by gluing the manifolds $[0,1]\times [0,1]$ and $S^1\times [0,1]$ together; the details are as follows: the manifold with boundary in the bordims is constructed by applying the ``straightening the angle" technique (see \cite{CFPerMap} or \cite[Appendix]{Rav}) to the manifold with corners formed by gluing $ Q \times S^{2n} \times [0,1] $ to 
$$ \left( (\beta \bar{Q}\times S^{2n}\times P) \cup_{\beta Q \times S^{2n} \times P}   (\beta Q \times W) \cup_{\beta Q \times S^{2n^{\prime}} \times P^{\prime}} (\beta \bar{Q} \times S^{2n^{\prime}}\times P^{\prime}) \right) \times [0,1]$$
along $(\partial Q \times S^{2n}) \times [0,1] \cong \left ( (\beta Q \times \partial_0 W) \times [0,1] \right) \times \{1\}$. We note that we consider $\left( (\beta Q \times \partial_0 W) \times [0,1] \right)$ as a subspace of 
$$ \left( (\beta \bar{Q}\times S^{2n}\times P) \cup_{\beta Q \times S^{2n} \times P}   (\beta Q \times W) \cup_{\beta Q \times S^{2n^{\prime}} \times P^{\prime}} (\beta \bar{Q} \times S^{2n^{\prime}}\times P^{\prime}) \right) $$
Finally, by ``straightening the angle" (see \cite{CFPerMap} or \cite[Appendix]{Rav}) of $\beta \bar{Q} \times W$, we obtain a smooth compact ${\rm spin}^c$ manifold with boundary; it has boundary 
$$\partial \beta \bar{Q} \times W \cup \beta Q \times \partial W=(\beta \bar{Q}\times S^{2n}\times P) \cup_{\beta Q \times S^{2n} \times P}   (\beta Q \times W) \cup_{\beta Q \times S^{2n^{\prime}} \times P^{\prime}} (\beta \bar{Q} \times S^{2n^{\prime}}\times P^{\prime})$$
Combining these three observations completes the construction of the required bordism at least at the level of the ``manifold part" of the cycle. That this bordism respects the continuous function data also follows from standard results in bordism theory. In particular, by \cite{CFPerMap}, the ``straightening the angle" process respects the continuous functions involved in the bordism. That the bordism respects the K-theory data is a bit move involved, but it also follows from the fact that the ``straightening the angle" process respects K-theory data (see \cite[Appendix]{Rav}). This completes the proof that $\Psi_{(W,\nu)}$ respects the bordism relation.
\par
For vector bundle modification, let $((Q, \beta Q), \xi, f)$ be a cycle and $((Q^{\prime}, \beta Q^{\prime}), \xi^{\prime}, f^{\prime})$ be its image under $\Psi_{(W,\nu)}$. Also, let $(V_Q, V_{\beta Q})$ be a ${\rm spin^c}$ $P$-bundle of even rank over $((Q, \beta Q)$. Form the vector bundle $ (\pi^{Q\times S^{2n}}_Q)^*(V_Q) \cup_{\partial Q} (\pi^{W\times \beta Q}_{\beta Q})^*(V_{\beta Q})$ over $Q^{\prime}$; it is also ${\rm spin^c}$, of even rank, and can be given the structure of a $P^{\prime}$-bundle. As such, we can consider the vector bundle modification of $((Q^{\prime}, \beta Q^{\prime}), \xi^{\prime}, f^{\prime})$ by this bundle. Moreover, the definition of vector bundle modification implies that 
 $$\left( (Q^{\prime}, \beta Q^{\prime}), \xi^{\prime}, f^{\prime} \right)^{ \left( (\pi^{Q\times S^{2n}}_Q )^*(V_Q) \cup_{\partial Q} (\pi^{W\times \beta Q}_{\beta Q})^*(V_{\beta Q}) \right) } = \Psi_{(W,\nu)} \left( ((Q, \beta Q), \xi, f \right)^{(V_Q, V_{\beta Q})}$$
This completes the proof that $\Psi_{(W,\nu)}$ is well-defined.
\par
To see that this map is an isomorphism, we can use the generalized Bockstein sequence and Five Lemma. However, there is a more direct approach as follows. We recall that $(-W, \nu)$ denotes the opposite of $(W, \nu)$ and that $(-W,\nu)$ is therefore a bordism from $(P^{\prime}, [P^{\prime}\times \field{C}])^{N^{\prime}}$ to $(P,[P\times \field{C}])^N$. Using the same choice for vector bundles representing $\nu$ (see Equations \ref{FixBunIsoFirst} to \ref{FixBunIsoLast}), we have the map 
$$\Psi_{(-W,\nu)}: K_*(X;P^{\prime};A) \rightarrow K_*(X;P;A).$$
Moreover, it follows from standard results in bordism theory (essentially the observation that the double of a manifold is a boundary) imply that $\Psi_{(-W,\nu)}$ is the inverse of $\Psi_{(W,\nu)}$. 
\par
Finally, that isomorphism constructed in this proof is natural with respect to $X$ and $A$ follows from the explicit nature of the map.
\end{proof}
\begin{remark} \label{remThmDepKhomClass}
The previous theorem implies that if we can compute $K_*(X;P;A)$ for $P$ equaling $k$-points, $S^1$, and $S^2$, then we have determined $K_*(X;P;A)$ for any $P$, which satisfy the conditions in the statement of the previous theorem and such that $(P, [P\times \field{C}])$ is equivalent in $K_*(pt)$ to $(k$-points,$[k$-points$\times \field{C}])$, $(S^1, [S^1\times \field{C}])$, or $(S^2, [S^2\times \field{C}])$. We note that this list of classes contains all elements in $K_*(pt)$ and that these three examples will be discussed in detail in Section \ref{exSec}. 
\end{remark}
\section{The assembly map for $P$-manifolds}
Let $\Gamma$ be a finitely generated discrete group, $B\Gamma$ be the classifying space of $\Gamma$, $C^*(\Gamma)$ be the reduced group $C^*$-algebra of $\Gamma$, although similar results hold for the full group $C^*$-algebra. For simpility, we assume $B\Gamma$ is a finite CW-complex. Recall (see \cite{Wal}) that the Baum-Connes assembly map, $\mu: K_*(B\Gamma) \rightarrow K_*(pt;C^*(\Gamma))$, can be defined at the level of geometric cycles as follows: 
$$(M,E,f) \mapsto (M,E\otimes_{\field{C}} f^*(\mathcal{L}_{B\Gamma}))$$
where $\mathcal{L}_{B\Gamma}$ is the Mishchenko line bundle; it was defined in Example \ref{MishBunEx}. This definition of assembly generalizes to the $P$-manifold setting as follows.
\begin{define}
The assembly map, $\mu_{P}:K_*(B\Gamma;P) \rightarrow K_*(pt;P;C^*(\Gamma))$, is defined at the level of cycles via
$$((Q,\beta Q), (E_Q,E_{\beta Q}), f) \mapsto ((Q,\beta Q), (E_Q \otimes_{\field{C}} f^*(\mathcal{L}_{B\Gamma}), E_{\beta Q} \otimes_{\field{C}} (f_{\beta Q})^*(\mathcal{L}_{B\Gamma}))).$$
\end{define}
\begin{theorem} \label{BCForPMfld}
The map $\mu_P$ is well-defined. Moreover, the following diagram is commutative:
{\footnotesize \minCDarrowwidth10pt\begin{center}
$\begin{CD}
@>>> K_0(B\Gamma) @>\Phi_P>> K_{{\rm dim}(P)}(B\Gamma) @>r_P>> K_0(B\Gamma;P) @>\delta_P>> K_{{\rm dim}(P)-1}(B\Gamma) @>>> \\
@. @V\mu VV @V\mu VV @V\mu_P VV @V\mu VV  @. \\
@>>> K_0(pt;C^*(\Gamma)) @>\Phi_P >> K_{{\rm dim}(P)}(pt; C^*(\Gamma)) @>r_P >> K_0(pt;P;C^*(\Gamma)) @>\delta_P >> K_{{\rm dim}(P)-1}(pt;C^*(\Gamma)) @>>> 
\end{CD}$
\end{center}}
\noindent
In particular, if $\mu: K_*(B\Gamma) \rightarrow K_*(pt;C^*(\Gamma))$ is an isomorphism, then $\mu_P$ is an isomorphism.
\end{theorem}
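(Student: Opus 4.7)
The plan is to verify each of the three assertions directly from the definitions: well-definedness of $\mu_P$, commutativity of the six squares in the diagram, and then invocation of the Five Lemma.

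For well-definedness, I would check that $\mu_P$ respects the two relations generating $\sim$ (bordism and vector bundle modification). First note that if $(E_Q,E_{\beta Q})$ is an $A$-vector bundle over $(Q,\beta Q)$ with clutching isomorphism $\alpha$, and $f:(Q,\beta Q)\to B\Gamma$ is continuous, then Example \ref{MishBunEx} shows $(f^*(\mathcal{L}_{B\Gamma}), f_{\beta Q}^*(\mathcal{L}_{B\Gamma}))$ is a $C^*(\Gamma)$-bundle; tensoring a $\mathbb{C}$-bundle fiberwise with a $C^*(\Gamma)$-module bundle preserves the $C^*(\Gamma)$-bundle structure and its clutching data, so the output of $\mu_P$ is a genuine cycle in the target. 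For bordism invariance, a bordism $((\bar Q,\beta\bar Q),\bar\xi,\bar f)$ over $B\Gamma$ produces the bordism $((\bar Q,\beta\bar Q),\bar\xi\otimes \bar f^*(\mathcal{L}_{B\Gamma}),\bar f)$ over $pt$ with $C^*(\Gamma)$-coefficients, and its boundary agrees with $\mu_P$ applied to the boundary. For vector bundle modification, the spin$^c$ $P$-bundle $(V_Q,V_{\beta Q})$ and the Bott class $\beta$ are purely complex, so tensoring commutes with the $\pi^*(\xi)\otimes\beta$ construction; one uses that $\pi^*(f^*\mathcal{L}_{B\Gamma}) = (f\circ\pi)^*\mathcal{L}_{B\Gamma}$.

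For commutativity, I would unwind each square at the cycle level:
\begin{itemize}
\item[(i)] $r_P\circ\mu = \mu_P\circ r_P$: both send $(M,E,f)$ to $((M,\emptyset), E\otimes f^*\mathcal{L}_{B\Gamma}, f)$.
\item[(ii)] $\delta_P\circ\mu_P = \mu\circ\delta_P$: both send $((Q,\beta Q),(E_Q,E_{\beta Q}),f)$ to $(\beta Q, E_{\beta Q}\otimes f_{\beta Q}^*\mathcal{L}_{B\Gamma}, f_{\beta Q})$, using that the Mishchenko bundle's restriction to the boundary goes through $f_{\beta Q}$ by definition of a continuous map of $P$-manifolds.
\item[(iii)] $\Phi_P\circ\mu = \mu\circ\Phi_P$: both send $(M,E,f)$ to $(M\times P, \pi_M^*(E\otimes f^*\mathcal{L}_{B\Gamma}), f\circ\pi_M)$, again because $\pi_M^*\circ f^* = (f\circ\pi_M)^*$.
\end{itemize}
(Strictly there are six squares rather than three because the Bockstein sequence repeats; each repetition is handled identically.)

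For the final assertion, I would apply the Five Lemma. Theorem \ref{BockTypeSeq} applied to $X = B\Gamma$ and $A = \mathbb{C}$ gives exactness of the top row, and applied to $X = pt$ and $A = C^*(\Gamma)$ gives exactness of the bottom row. The hypothesis that $\mu\colon K_*(B\Gamma)\to K_*(pt;C^*(\Gamma))$ is an isomorphism means four out of every five vertical arrows in any consecutive $5$-term portion of the diagram are isomorphisms, so the Five Lemma forces $\mu_P$ to be an isomorphism in both the even and odd degrees.

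The main obstacle I expect is the cycle-level verification of well-definedness under vector bundle modification: one must keep track of the three tensor factors (the original bundle $\xi$, the Mishchenko bundle $f^*\mathcal{L}_{B\Gamma}$, and the Bott class $\beta$) and confirm they assemble into the same $C^*(\Gamma)$-bundle on $S(V_Q\oplus\mathbf{1})$ whether one first applies $\mu_P$ and then vector-bundle-modifies, or first modifies and then applies $\mu_P$. This is essentially naturality of pullback and associativity of tensor products, but must be carried out respecting the $P$-bundle clutching data on both components $Q$ and $\beta Q$. The other steps are formal once well-definedness is in hand.
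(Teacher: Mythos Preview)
Your proposal is correct and follows exactly the paper's approach: the paper's proof simply asserts that everything is defined at the level of geometric cycles, so one checks compatibility with the relations directly and then applies the Five Lemma, which is precisely what you spell out in detail. One small cosmetic point: in the target $K_*(pt;P;C^*(\Gamma))$ the continuous-map datum is the constant map to $pt$, not $\bar f$, but this does not affect the argument.
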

\begin{proof}
The first two statements follow by observing that all the groups are defined using geometric cycles and the maps are defined at the level of these cycle. As such, the proof is a matter of showing the relations on the various cycles are compatible, which can be checked directly. The last statement in the theorem follows from the first two and the Five Lemma.
\end{proof}

\section{Examples} \label{exSec}
In this section, we work with the bundle model of $K_*(X;P;A)$. That is, we use cycles as in Definition \ref{cycWithBun}. The definitions of the operations (e.g., addition, opposite, etc) and the relation (e.g., bordism, vector bundle modification, etc) are the natural ``$P$-manifold version" of those given in any of \cite{BD, BHS, Wal}. We include the definition of bordism in this context as a prototypical example. 
\begin{define}
\label{borForBunGrp}
A bordism or a cycle with boundary with respect to $X$, $P$, and $A$ in the bundle model is $((\bar{Q}, \beta \bar{Q}), (\bar{E}_Q, \bar{E}_{\beta Q}), \bar{f})$ where
\begin{enumerate}
\item $(\bar{Q},\beta \bar{Q})$ is a compact, smooth, ${\rm spin^c}$ $P$-manifold with boundary;
\item $(\bar{E}_Q, \bar{E}_{\beta Q})$ is a smooth $A$-vector bundle over $(\bar{Q}, \beta \bar{Q})$;
\item $\bar{f}: (\bar{Q},\beta \bar{Q}) \rightarrow X$ is a continuous map.
\end{enumerate} 
The boundary of a bordism, $((\bar{Q}, \beta \bar{Q}), (\bar{E}_Q, \bar{E}_{\beta Q}), \bar{f})$, is given by 
$$((\partial \bar{Q} - {\rm int}(\beta \bar{Q}), \partial \beta \bar{Q}), ((\bar{E}_Q) |_{\partial \bar{Q} - {\rm int}(\beta \bar{Q})}, \bar{E}_{\beta Q}|_{\partial \beta \bar{Q}}), \bar{f}|_{\partial \bar{Q} - {\rm int}(\beta \bar{Q})}).$$
By construction, it is a cycle in the sense of Definition \ref{cycWithBun}.
\end{define}
In the first example below we make use of K-homology with coefficients in $\zkz$. The reader can find details of the analytic (i.e., KK-theory) construction of this group in \cite{Schochet}. In particular, results in \cite{Schochet} imply that we can define the K-homology of $X$ with cofficients in $\zkz$ as follows:
$$K_*(X;\zkz):= KK^*(C(X),C_{\phi}).$$
where $C_{\phi}$ is the mapping cone of the unital inclusion of the complex numbers in the $k$ by $k$ matrices. In similar way,
$$K_*(X;A;\zkz):=KK^*(C(X), A\otimes C_{\phi}) \hbox{ and } K_*(A;\zkz):=K_*(A\otimes C_{\phi})$$
\begin{ex}
In this example, we work with the bundle model of $K_*(X;P;A)$ in the case when $P=k$-points. Results in \cite{Dee1, Dee2} imply that
$K_*(X;P;\field{C}) \cong K_*(X;\field{C};\zkz)$. The methods used in those papers can be generalized to the case of general $A$; that is, 
$$K_*(X;P;A) \cong K_*(X;A;\zkz)$$
Moreover, in the case when $X=pt$ and $A=\field{C}$, the map 
$$K_0(pt;\{ k\hbox{-points}\};\field{C}) \cong K_0(pt;\zkz) \cong \zkz$$
can be taken to be the map defined at the level of cycle 
$$((Q, \beta Q), (E_{Q}, E_{\beta Q})) \mapsto {\rm ind_{APS}}(D_{Q, E_Q}) \hbox{ mod }k.$$
where ${\rm ind}_{APS}(\: \cdot \:)$ denotes the Atiyah-Patodi-Singer index (see \cite{APS1}). 
\par
In the case of general $A$, using results in \cite{Schochet}, one has the six-term exact sequence:
\begin{center}
$\begin{CD}
K_0(X;A) @>k>> K_0(X;A) @>r>> K_0(X;A;\zkz) \\
@A\delta AA @. @V\delta VV \\
 K_1(X;A;\zkz) @<r<<  K_1(X;A) @<k<<  K_1(X;A)
\end{CD}$
\end{center}
If $K_*(A)$ contains no torsion of order $k$, then this six-term exact sequence reduces to the following two short exact sequences:
$$0  \rightarrow K_0(pt;A) \rightarrow K_0(pt;A) \rightarrow K_0(pt;A;\zkz) \rightarrow 0 $$
$$0  \rightarrow K_1(pt;A) \rightarrow K_1(pt;A) \rightarrow  K_1(pt;A;\zkz) \rightarrow 0.$$
In this case, an index map 
$$K_*(pt;\{k\hbox{-points}\};A) \rightarrow K_*(pt;A;\zkz) \cong K_*(A;\zkz)$$ 
can be defined using higher APS-index theory; the reader can find more details on this theory in \cite{LP} and the references therein. \par
A key object in higher APS-index theory is the notion of a spectral section (see \cite{LP}). There exists a spectral section with respect to a ${\rm spin^c}$-manifold $M$ and an $A$-bundle over it if and only if the index of the assoicated twisted Dirac operator vanishes. Hence, the existence of a spectral section for the boundary of a manifold with boundary $W$ and the restriction of an $A$-vector bundle over $W$ to $\partial W$ follows from the cobordism invariance of the higher index. In the case of a $\zkz$-manifold $(Q, \beta Q)$ and $A$-vector bundle $(E_{Q}, E_{\beta Q})$, we need to choose a spectral section with respect to $\beta Q$ and $E_{\beta Q}$; such a spectral section does not in general exist. In general, we have
$$0={\rm ind}(D_{\partial Q, E_{Q}|_{\partial Q}})= k \cdot {\rm ind}(D_{\beta Q, E_{\beta Q}}).$$ 
However, if we assume that $K_*(A)$ contains no torsion of order $k$, then this equality implies that ${\rm ind}(D_{\beta Q, E_{\beta Q}})=0$ and hence that a spectral section for $\beta Q$ and $E_{\beta Q}$ exists (see \cite[Section 2]{LP}).
\par
As such, under the assumption that $K_*(A)$ contains no torison of order $k$, we define the index map at the level of cycles via
$$((Q, \beta Q), (E_Q, E_{\beta Q})) \mapsto r({\rm ind_{APS}}(D_{(Q, E_Q)}(\mathcal{P}_{\beta Q}))$$
where
\begin{enumerate}
\item $\mathcal{P}_{\beta Q}$ denotes the pullback from $\beta Q$ to $\beta Q \times P \cong \partial Q$ of a spectral section for the manifold $\beta Q$ and bundle $E_{\beta Q}$; 
\item $D_{(Q, E_Q)}(\mathcal{P}_{\beta Q})$ denotes the Dirac operator on $Q$ twisted by $E_Q$ with the higher APS boundary condition associated with the spectral section $\mathcal{P}_{\beta Q}$;
\item ${\rm ind_{APS}}(\: \cdot \:)$ denotes the higher APS index;
\item $r: K_*(A) \rightarrow K_*(A;\zkz)$ denotes the map from the exact sequence above.
\end{enumerate}
We must show that this map is well-defined; it is not even clear that the map is well-defined at the level of cycles. The analytic data required to define the higher APS index is a metric, compatible connections on both the spinor and vector bundles, and a choice of a spectral section. To show that the above map is independent of these choices, we suppose that we have continuous families of this data. That is, we let
\begin{enumerate}
\item $\{g_t\}_{t\in [0,1]}$ be a one parameter family of Riemannian metrics on $(Q, \beta Q)$;
\item $\nabla_{E_{\beta Q},t}$ be a one parameter family of connections on $E_{\beta Q}$ which is compatible with $g_t|_{\beta Q}$;
\item $\nabla_{E_Q,t}$ be a one parameter family of connections on $E_Q$ which is compatible with $g_t$ and with the family of connections $\nabla_{E_{\beta Q},t}$;
\item $\hat{\mathcal{P}}_0$ and $\hat{\mathcal{P}}_1$ are the pullback of two choices of spectral section for $D_{\beta Q, E_{\beta Q}}$.
\end{enumerate}
Let $D_{(Q, E_Q, 0)}(\hat{\mathcal{P}}_0)$ and $D_{(Q, E_Q, 1)}(\hat{\mathcal{P}}_1)$ the Dirac operators associated to the end points of the family of data fixed above. Also, let $\tilde{D}_t$ denote the family of boundary operators associated to the above data restricted to the boundary; the reader is directed to \cite{LP} for more details on this family of operators. 
\par
With all of the above data fixed, we can apply \cite[Proposition 8 and Theorems 6 and 7]{LP} to obtain
$${\rm ind_{APS}}(D_{(Q, E_Q, 0)}(\hat{\mathcal{P}}_0))-{\rm ind_{APS}}(D_{(Q, E_Q, 1)}(\hat{\mathcal{P}}_1))= {\rm sf}(\tilde{D}_t, \hat{\mathcal{P}}_1, \hat{\mathcal{P}}_0) $$
where ${\rm sf}(\tilde{D}_t, \hat{\mathcal{P}}_1, \hat{\mathcal{P}}_0)$ is the spectral flow of the family of operators, $\tilde{D}_t$ (see \cite{LP} for details). 
\par
Applying the map $r$, observing that right hand side of the equation (i.e., the spectral flow term) is in the image of $k$, and exactness imply the required result: 
$$r({\rm ind_{APS}}(D_{(Q, E_Q, 0)}(\hat{\mathcal{P}}_0)))=r({\rm ind_{APS}}(D_{(Q, E_Q, 1)}(\hat{\mathcal{P}}_1))).$$
\par
Next, we must show that the map respects the three relations used to define $K_*(pt;\{k-\hbox{points}\};A)$. For the direct sum/disjoint union relation the proof follows from basic properties of the higher APS-index. 
\par
For the bordism relation, suppose $((Q, \beta Q), (E_Q, E_{\beta Q}), f)$ is the boundary of the bordism $((\bar{Q}, \beta \bar{Q}), (\bar{E}_{\bar{Q}}, \bar{E}_{\beta \bar{Q}}), g)$. After fixing the required analytic data to define the higher APS index, we have that \cite[Theorems 8 and 9]{LP} imply that
$${\rm ind}_{AS}(D_{\partial \bar{Q}, \bar{E}_{\bar{Q}}|_{\partial \bar{Q}}}) = {\rm ind_{APS}}(D_{(Q, E_Q)}(\mathcal{P})) + k \cdot {\rm ind_{APS}}(D_{(\beta \bar{Q}, \beta \bar{E}_{\bar{Q}})}(\tilde{\mathcal{P}}))  $$
where 
\begin{enumerate}
\item $\pi: \partial Q \cong \beta Q \times P \rightarrow \beta Q$ is the projection map;
\item $\pi^*(\tilde{\mathcal{P}})=\mathcal{P}$; recall that, by assumption, $\mathcal{P}$ is the pullback of a spectral section associated to $\beta Q$ and $\beta E$.
\end{enumerate}
Applying the map $r$ to this equation and using exactness, we obtain
$$ r({\rm ind}_{AS}(D_{\partial \bar{Q}, \bar{E}_{\bar{Q}}|_{\partial \bar{Q}}})) = r({\rm ind_{APS}}(D_{(Q, E_Q)}(\mathcal{P}))). $$
Finally, the bordism invariance of the higher AS-index implies that ${\rm ind}_{AS}(D_{\partial \bar{Q}, \bar{E}_{\bar{Q}}|_{\partial \bar{Q}}})$ vanishes. Hence the proof of the required invariance under the bordism relation is complete. Finally, for vector bundle modification the result follows from \cite[Proposition 4.2]{DeeAnaTopIndMaps}.
\end{ex}
The discussion in the previous example gives a proof of the next theorem. We use the notation introduced in that example and note that the assumption that ${\rm ind_{AS}}(D_{\beta Q, E_{\beta Q}})=0$ implies that there exists a spectral section with respect to $\partial Q$ and $E_Q|_{\partial Q}$ that is the pullback of a spectral section with respect to $\beta Q$ and $E_{\beta Q}$ (see \cite[Section 2]{LP}).
\begin{theorem} \label{higFMThm}
Let $((Q, \beta Q), (E_Q, E_{\beta Q}))$ be a cycle in $K_*(pt;k$-points$;A)$ such that ${\rm ind_{AS}}(D_{\beta Q, E_{\beta Q}})=0$. Then 
$$r({\rm ind_{APS}}(D_{Q, E_Q}(\mathcal{P})))$$
is a topological invariant (i.e., independent of the metric, connection, etc). Moreover, it is also,  in the sense of Definition \ref{borForKthGrp}, a cobordism invariant. 
\end{theorem}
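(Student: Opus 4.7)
The plan is to extract the two invariance statements directly from the analysis carried out in the previous example, adapting it into a self-contained argument using the hypothesis ${\rm ind_{AS}}(D_{\beta Q, E_{\beta Q}})=0$. First, the hypothesis is exactly what guarantees, via \cite[Section 2]{LP}, the existence of a spectral section on $\beta Q$ for $D_{\beta Q, E_{\beta Q}}$; pulling this back along $\pi\colon \partial Q \cong \beta Q \times P \to \beta Q$ produces the spectral section $\mathcal{P}$ used to impose the higher APS boundary condition on $Q$. Hence the expression ${\rm ind_{APS}}(D_{Q, E_Q}(\mathcal{P}))$ makes sense once any admissible choice of metric, connections, and spectral section on $\beta Q$ is fixed.

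For topological invariance, I would compare two admissible sets of analytic data $(g_0, \nabla_{E_Q,0}, \nabla_{E_{\beta Q},0}, \hat{\mathcal{P}}_0)$ and $(g_1, \nabla_{E_Q,1}, \nabla_{E_{\beta Q},1}, \hat{\mathcal{P}}_1)$ by interpolating through continuous families $g_t, \nabla_{E_Q,t}, \nabla_{E_{\beta Q},t}$ (possible since the space of such data is convex/connected), with the two boundary spectral sections $\hat{\mathcal{P}}_0, \hat{\mathcal{P}}_1$ taken at the endpoints. Applying \cite[Proposition 8 and Theorems 6 and 7]{LP} to this family yields
$${\rm ind_{APS}}(D_{(Q,E_Q,0)}(\pi^*\hat{\mathcal{P}}_0)) - {\rm ind_{APS}}(D_{(Q,E_Q,1)}(\pi^*\hat{\mathcal{P}}_1)) = {\rm sf}(\tilde D_t, \hat{\mathcal{P}}_1, \hat{\mathcal{P}}_0) \in K_*(A).$$
Because the spectral flow on $\beta Q \times P$ with $P = k$-points is $k$ times the spectral flow on $\beta Q$, the right-hand side lies in the image of multiplication by $k$ on $K_*(A)$; exactness of the Bockstein sequence discussed in the example then forces $r$ to kill this difference.

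For cobordism invariance, suppose $((Q,\beta Q),(E_Q, E_{\beta Q}))$ bounds $((\bar Q, \beta \bar Q),(\bar E_{\bar Q}, \bar E_{\beta \bar Q}))$ in the sense of Definition \ref{borForKthGrp}. After choosing compatible analytic data on $\bar Q$ and a spectral section $\tilde{\mathcal{P}}$ on $\beta \bar Q \times P$ whose restriction pulls back to $\mathcal{P}$, the gluing/additivity formula \cite[Theorems 8 and 9]{LP} gives
$${\rm ind_{AS}}(D_{\partial \bar Q, \bar E_{\bar Q}|_{\partial \bar Q}}) = {\rm ind_{APS}}(D_{(Q, E_Q)}(\mathcal{P})) + k \cdot {\rm ind_{APS}}(D_{(\beta \bar Q, \bar E_{\beta \bar Q})}(\tilde{\mathcal{P}})).$$
The left-hand side vanishes by the bordism invariance of the higher Atiyah--Singer index applied to $\bar Q$, and applying $r$ annihilates the $k$-divisible correction on the right; the resulting identity $r({\rm ind_{APS}}(D_{(Q,E_Q)}(\mathcal{P})))=0$ is the required cobordism invariance.

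The main obstacle I anticipate is the careful bookkeeping around the pullback condition on the spectral section at the corners. One must verify that in both parts of the proof an interpolating family of spectral sections on $\beta Q \times P$ can be chosen consistently as pullbacks from $\beta Q$, and that the gluing formula of \cite{LP} can be invoked at the corner $\beta \bar Q \times \partial P$ that appears when $\bar Q$ is decomposed along $\beta \bar Q \times P$. Given the hypothesis ${\rm ind_{AS}}(D_{\beta Q, E_{\beta Q}})=0$, each of these issues reduces to the existence/extension results of \cite[Section 2]{LP}, so no new analytical input beyond those cited results should be required.
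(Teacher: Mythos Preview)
Your proposal is correct and follows essentially the same approach as the paper: the paper's proof is precisely the content of the preceding example, which establishes topological invariance via the variation formula \cite[Proposition 8 and Theorems 6 and 7]{LP} (the spectral flow term being $k$-divisible and hence killed by $r$) and cobordism invariance via the gluing formula \cite[Theorems 8 and 9]{LP} combined with bordism invariance of the higher AS-index. One small point: in your cobordism paragraph, $\tilde{\mathcal{P}}$ should be a spectral section on $\beta Q = \partial(\beta \bar Q)$ (so that $\pi^*(\tilde{\mathcal{P}}) = \mathcal{P}$ on $\partial Q$), not on $\beta \bar Q \times P$; with that correction your formula matches the paper exactly.
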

\begin{remark}
In fact, there is a topological formula for $r({\rm ind_{APS}}(D_{Q, E_Q}(\mathcal{P}))$. It is obtained in the same way as in the Freed-Melrose index theorem; that is, one considers an embedding of the $\zkz$-manifold into a suitable half-space (see \cite{Fre} or \cite[Section 1.4]{Dee1}) and defines a wrong-way map in $K$-theory. However, we will not discuss this in detail.
\end{remark}
\begin{ex}
In this example, we work with the bundle model of $K_*(X;P;A)$ in the case when $P$ is the circle, $S^1$. We will show that 
$$K_0(pt; S^1; A) \cong K_0(A) \oplus K_0(A) \hbox{ and } K_1(pt;S^1;A) \cong K_1(A)\oplus K_1(A).$$ 
In particular, if $A = \field{C}$, then $K_0(pt;S^1) \cong \field{Z} \oplus \field{Z}$ and $K_1(pt;S^1)\cong \{0\}$. 
\par
This result can be obtained indirectly using the exact sequence in Theorem \ref{BockTypeSeq}. However, there is an explicit isomorphism defined as follows:
$${\rm ind}_{S^1}: ((Q,\beta Q), (E_{Q}, E_{\beta Q})) \mapsto \left( {\rm ind_{AS}}(D_{(\beta Q, \beta E)}), {\rm ind_{AS}}(D_{Q\cup_{\partial Q} \beta Q \times \field{D}, E_{Q} \cup \pi^*(E_{\beta Q})}) \right)$$
where
\begin{enumerate}
\item $\field{D}$ denotes the unit disk;
\item $E_{Q} \cup \pi^*(E_{\beta Q})$ denotes the vector bundle obtained by clutching via the isomorphism assoicated to the $P$-bundle $(E_Q, E_{\beta Q})$;
\item $D_{M,E}$ denotes the Dirac operator of the manifold $M$ twisted by the bundle $E$;
\item $\pi: \beta Q \times P \rightarrow \beta Q$ denotes the projection map.
\end{enumerate}
We must prove that this map is well-defined. That the map respects the direct sum/disjoint union relation follows using definitions and basic properties of the index map. For the bordism relation, suppose that $((Q,\beta Q), (E_{Q}, E_{\beta Q}))$ is the boundary of the bordism $((\bar{Q},\beta \bar{Q}), (\bar{E}_{Q}, \bar{E}_{\beta Q}))$. We are required to show that 
$${\rm ind}(D_{(\beta Q, \beta E)})=0 \hbox{ and } {\rm ind}(D_{Q\cup_{\partial Q} \beta Q \times\field{D}, E_{Q} \cup \pi^*(E_{\beta Q})})=0.$$
The first of the two holds by the cobordism invariance of the index and the fact that $(\beta Q, \beta E)= \partial (\beta \bar{Q}, \bar{E}_{\beta Q})$.  \par
The proof of the second equality is more involved. Let $\tilde{\pi}: \beta \bar{Q} \times S^1 \rightarrow \beta \bar{Q}$ be the projection map. The definition of bordism as in Definition \ref{borForBunGrp} implies that the cycle $(Q\cup \beta \bar{Q}\times S^1,E\cup\tilde{\pi}^*(\bar{E}_{\beta \bar{Q}}))$ is a boundary; it is the boundary of $(\bar{Q}, \bar{E})$. Moreover, standard results in bordism theory imply that
$$(Q\cup_{\partial Q} \beta \bar{Q}\times S^1, E\cup \tilde{\pi}^*(E_{\beta \bar{Q}})) \dot{\cup} -(Q \cup_{\partial Q} \beta Q \times \field{D}, E_{Q} \cup \pi^*(E_{\beta Q}))$$
is bordant to $$(\beta Q \times \field{D} \cup_{\partial Q} \beta \bar{Q}\times S^1, \pi^*(E_{\beta Q}) \cup \tilde{\pi}^*(E_{\beta \bar{Q}}) ).$$
By ``straightening the angle" (see \cite{CFPerMap} or \cite[Appendix]{Rav}) of $\beta \bar{Q} \times \field{D}$, one can show that $ (\beta Q \times \field{D} \cup_{\partial Q} \beta \bar{Q}\times S^1, \pi^*(E_{\beta Q}) \cup \tilde{\pi}^*(E_{\beta \bar{Q}}))$ is a boundary. The combination of the three bordisms discussed in this paragraph imply that $(Q \cup_{\partial Q} \beta Q \times \field{D}, E_{Q} \cup \pi^*(E_{\beta Q}))$ is a boundary. The cobordism invariance of the index, then implies the required vanishing result.
\par
For the vector bundle modification relation, if $(V, V_{\beta Q})$ is an even rank ${\rm spin^c}$ $P$-vector bundle over $(Q, \beta Q)$, then one can form the vector bundle $V \cup \pi^*(V_{\beta Q})$; it is an even rank ${\rm spin^c}$ vector bundle over $Q \cup \beta Q \times \field{D}$. This fact and the invariance of the index under vector bundle modification in the standard Baum-Douglas model give the required result.
\par
Next, we prove that the following diagram is commutative:
{\footnotesize \minCDarrowwidth10pt\begin{center}
$\begin{CD}
@>>> K_1(pt;A) @>\Phi_{S^1}>> K_0(pt;A) @>r_{S^1}>> K_0(pt;S^1;A) @>\delta_{S^1}>> K_0(pt;A) @>\Phi_{S^1}>> K_1(pt;A) @>>> \\
@. @V{\rm ind} VV @V{\rm ind} VV @V{\rm ind}_{S^1} VV @V{\rm ind} VV  @V{\rm ind}VV @. \\
@>>> K_1(A) @>\Phi >> K_0(A) @>r >> K_0(A) \oplus K_0(A) @>\delta >> K_0(A) @>\Phi>> K_1(A) @>>> 
\end{CD}$
\end{center}}
\noindent
To begin, we note that the $\Phi_{S_1}$ is the zero map because $S^1$ is a boundary. Secondly, if $(M,E)\in K_*(pt;A)$, then 
$$({\rm ind}_{S^1}\circ r_{S^1})(M,E) = ({\rm ind}_{AS}(D_E), 0)= r({\rm ind}(M,E))$$
as required. Finally, if $((Q,\beta Q), (E_Q, E_{\beta Q})) \in K_*(pt;S^1;A)$, then 
$$({\rm ind}\circ \delta_{S^1})((Q,\beta Q), (E_Q, E_{\beta Q}))={\rm ind}_{AS}(D_{(\beta Q, E_{\beta Q})})=(\delta \circ {\rm ind}_{S^1})((Q, \beta Q), (E_Q, E_{\beta Q}))$$
The Five Lemma and the previous commutative diagram imply the main result of this example (i.e., that the index map $K_*(pt;S^1;A)$ to $K_*(A)\oplus K_*(A)$ is an isomorphism). 
\end{ex}
For other examples of $P$, one can also compute $K_*(pt;P;A)$. For example, if $P$ is the two-sphere, $S^2$, then an isomorphism, $K_*(pt;S^2;A) \rightarrow K_{*+1}(A)\oplus K_*(A)$, can defined via
$$((Q, \beta Q), (E_{Q}, E_{\beta Q})) \mapsto  \left( {\rm ind}(D_{(\beta Q, \beta E)}), {\rm ind}(D_{Q\cup_{\partial Q} \beta Q \times \field{D}^2, E_{Q} \cup \pi^*(E_{\beta Q})}) \right)$$
where
\begin{enumerate}
\item $\field{D}^2$ is the two-dimensional ball;
\item $\pi:\beta Q \times \field{D}^2 \rightarrow \beta Q$ is the projection map;
\item $D_{\beta Q, E_{\beta Q}}$ and ${\rm ind_{AS}}$ are defined as in the previous example.
\end{enumerate}
The proof that this map is an isomorphism is similar to the proof that ${\rm ind}_{S^1}$ is an isomorphism given in the previous example. Furthermore, these methods can be used to show that $K_*(pt;S^n;A) \cong K_*(A) \oplus K_{*-n-1}(A)$ for any $n$.
\section{Generalizations}
The constructions in the previous sections of the paper have a number of generalizations.  The most obvious to the reader familiar with the Baum-Douglas model for K-homology are to various equivariant settings. For the corresponding equivarent theories one should replace our cycles (i.e., Definition \ref{cycWithKth}) with the natural generalization of the cycles in \cite{BOOSW} in the case of a compact Lie group or \cite{BHSProGrpAct} in the case of a discrete group which acts properly. Both these two generalization can be obtained using standard techniques and tools developed to this point.  For example, in the case of proper discrete group actions, the reader should see \cite[Section 4.5]{Rav} for more on the correct notion of ``normal bordism". We will not give a detailed developement. On the other hand, in the case of the action of a groupoid the correct generalization is less clear, but would be an interesting project. The reader might find \cite{EM} a useful starting point. This completes the discussion of the equivariant generalizations.
\par
Another generalization is as follows. Based on the statement of Theorem \ref{uniquenessThm}, one is lead to consider the map on K-homology induced by more general cycles in $K_*(pt)$; the precise definition of the relevant cycles is given below in Definition \ref{defGenCyc}. Let $(P,F)$ be a cycle in $K_*(pt)$, then we can consider the map $\Phi_{(P,F)}: K_*(X;A) \rightarrow K_{*+{\rm dim}(P)}(X;A)$ defined at the level of cycles via
$$(M,E,f) \mapsto (M \times P, (\pi_M)^*(E) \otimes (\pi_P)^*(F), f \circ \pi_M)$$
where $\pi_M$ (resp. $\pi_P$) denote the projection map from $M\times P$ to $M$ (resp. $P$). One is naturally led to ask if the constructions in the previous sections generalize to this setting; this is the case so long as we continue to assume $P$ has trivial stable normal bundle. To be precise, we have the following definition of a cycle with respect to $(P,F)$:
\begin{define} \label{defGenCyc}
A geometric cycle with bundle data, over $X$ with respect to $(P,F)$ and $A$ is a triple $((Q,\beta Q), (E_Q, E_{\beta Q}), f)$ where
\begin{enumerate}
\item $(Q,\beta Q)$ is a compact, smooth, ${\rm spin^c}$ $P$-manifold;
\item $E_Q$ is a smooth $A$-vector bundle over $Q$;
\item $E_{\beta Q}$ is a smooth $A$-vector bundle over $\beta Q$;
\item $\alpha: E_Q|_{\partial Q} \rightarrow (\pi_{\beta Q})^*(E_{\beta Q}) \otimes (\pi_{P})^*(F)$ is an isomorphism of $A$-vector bundles; we note that $\pi_{\beta Q}$ (respectively $\pi_P$) denotes the projection map from $\partial Q \cong \beta Q\times P$ to $\beta Q$ (respectively $P$);
\item $f$ is a continuous map from $(Q,\beta Q)$ to $X$.
\end{enumerate}
\end{define}
The reader should note that when $F=P\times \field{C}$, Definition \ref{defGenCyc} is exactly the same as Definition \ref{cycWithBun}. Let $K_*(X;(P,F);A)$ denote the set of isomorphism classes of cycles modulo the natural generalization of the equivalence discussed in Definition \ref{GeoGroup}. The generalizations of the two main theorems in Section \ref{geoModSec} are as follows. We will not give the proofs since they can be obtained using our previous proofs with only additional notational complexity.
\begin{theorem}
Let $X$ be a finite CW-complex, $A$ be a unital $C^*$-algebra, $P$ be a smooth compact $spin^c$-manifold that has trivial stable normal bundle and well-defined dimension modulo two, and $F$ be a vector bundle over $P$. Then, the following sequence is exact
\begin{center}
$\begin{CD}
K_0(X;A) @>\Phi>> K_{{\rm dim}(P)}(X;A) @>r>> K_{{\rm dim}(P)}(X;(P,F);A) \\
@A\delta AA @. @V\delta VV \\
 K_{{\rm dim}(P)+1}(X;(P,F);A) @<r<<  K_{{\rm dim}(P)+1}(X;A) @<\Phi<<  K_1(X;A)
\end{CD}$
\end{center}
where the maps are defined as follows
\begin{enumerate}
\item $\Phi$ is defined at the level of cycles via 
$$(M,E,f) \mapsto (M \times P, (\pi_M)^*(E) \otimes (\pi_P)^*(F), f \circ \pi_M)$$ 
where $\pi_M$ (respectively $\pi_P$) denotes the projection map from $M\times P$ to $M$ (respectively $P$);
\item $r$ is defined at the level of cycles via 
$$ (M, E, f) \mapsto ((M,\emptyset), E, f);$$ 
\item $\delta$ is defined at the level of cycles via 
$$((Q,\beta Q), (E_Q,  E_{\beta Q}), f) \mapsto (\beta Q,  E_{\beta Q}, f|_{\beta Q}).$$
\end{enumerate}
\end{theorem}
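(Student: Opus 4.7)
The plan is to mirror the proof of Theorem \ref{BockTypeSeq}, with the only modification being that wherever a bundle on a product $M \times P$ appeared as a pullback from the first factor, it is now twisted by $\pi_P^*(F)$. All of the arguments are geometric (bordisms and vector bundle modifications) and the presence of $F$ changes none of the underlying manifold-level constructions; it only alters the bundles carried along.

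First I would verify well-definedness and the vanishing of successive compositions. For $\Phi$ this is a matter of twisting the standard product construction by $\pi_P^*(F)$ and checking that bordisms of cycles $(M,E,f)$ in $K_*(X;A)$ yield bordisms of the products with $P$ carrying the twist $\pi_P^*(F)$; the key point is that $\pi_P^*(F)$ is already defined on $P$ and so pulls back naturally to any $\bar{M} \times P$. For $r \circ \Phi = 0$ I would exhibit the explicit bordism with $\bar{Q} = M \times P \times [0,1]$ and $\beta \bar{Q} = M$, with bundle $\pi_M^*(E) \otimes \pi_P^*(F)$ on $\bar{Q}$ and $E$ on $\beta\bar{Q}$; the clutching isomorphism on $M \times P \times \{1\} \cong \beta\bar{Q} \times P$ is tautological. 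For $\Phi \circ \delta = 0$, the cycle $(Q,E_Q,f)$ itself gives the required bordism, since the $(P,F)$-bundle structure on $(E_Q, E_{\beta Q})$ provides an isomorphism $E_Q|_{\partial Q} \cong \pi_{\beta Q}^*(E_{\beta Q}) \otimes \pi_P^*(F)$. The identity $\delta \circ r = 0$ is immediate.

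Next I would prove exactness at each node, adapting verbatim the argument given for Theorem \ref{BockTypeSeq}. For $\ker(\delta) \subseteq \operatorname{im}(r)$, given a cycle $((Q,\beta Q),(E_Q,E_{\beta Q}),f)$ whose boundary data vanishes, I would invoke Corollary 3.something (the normal bordism characterization of triviality) to arrange that $(\beta Q, E_{\beta Q}, f|_{\beta Q}) = \partial(W,\eta,g)$ with $\eta|_{\partial W} = E_{\beta Q}$. Then the glued manifold $Q \cup_{\partial Q} W \times P$ carries a natural $A$-vector bundle built from $E_Q$ and $\pi_W^*(\eta) \otimes \pi_P^*(F)$, clutched by $\alpha$; here the twist by $\pi_P^*(F)$ is precisely what makes the clutching compatible with the $(P,F)$-cycle structure of the original cycle. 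A pair-of-pants bordism parallel to the one in Theorem \ref{BockTypeSeq} then exhibits $r$ of the glued cycle as equivalent to the original. For $\ker(r) \subseteq \operatorname{im}(\Phi)$, a normal nullbordism $(\bar Q,\beta\bar Q)$ of $(M,\emptyset)$ has $\partial\bar Q = M \sqcup \beta\bar Q \times P$ and a bundle on $\bar Q$ whose restriction to $\beta\bar Q \times P$ is, by the $(P,F)$-bundle axiom, isomorphic to $\pi_{\beta\bar Q}^*(\bar E_{\beta Q})\otimes\pi_P^*(F)$; thus $\bar Q$ is a bordism in $K_*(X;A)$ from $(M,E,f)$ to $\Phi(\beta\bar Q,\bar E_{\beta Q},\bar f_{\beta Q})$. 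For $\ker(\Phi) \subseteq \operatorname{im}(\delta)$, a nullbordism of $(M\times P, \pi_M^*(E)\otimes\pi_P^*(F), f\circ\pi_M)$ in $K_*(X;A)$ is, tautologically, a $(P,F)$-manifold-with-boundary having $\beta$-part $(M,E,f)$, so the nullbordism is a preimage under $\delta$.

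The main obstacle is ensuring that the bundle-theoretic bookkeeping on glued manifolds (for the $\ker(\delta)\subseteq\operatorname{im}(r)$ step) is consistent with the clutching data $\alpha$ required by Definition \ref{defGenCyc}. Concretely, one must choose the glued bundle on $Q\cup_{\partial Q} W\times P$ so that a pullback argument (analogous to the one using Proposition \ref{gluePmfldWithbound} in the original proof) realizes both the original cycle and its $r$-image simultaneously, while respecting the $\pi_P^*(F)$-twist. This amounts to proving a twisted analog of Proposition \ref{gluePmfldWithbound}, whose statement and proof are routine generalizations obtained by tensoring throughout with $F$ on the $P$-factor. Once that lemma is in place, the remainder of the proof transcribes Theorem \ref{BockTypeSeq} with the replacement $\pi^*(\eta) \leadsto \pi_W^*(\eta)\otimes \pi_P^*(F)$ throughout. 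For this reason the authors' remark that the proof incurs only additional notational complexity is accurate.
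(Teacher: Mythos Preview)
Your proposal is correct and aligns exactly with the paper's approach: the paper gives no proof of this theorem, stating only that it ``can be obtained using our previous proofs with only additional notational complexity,'' and your write-up is precisely the transcription of the proof of Theorem~\ref{BockTypeSeq} with the twist by $\pi_P^*(F)$ inserted at each occurrence of a bundle pulled back along $\beta Q \times P \to \beta Q$. Your identification of the one nontrivial bookkeeping point---the need for a twisted analogue of Proposition~\ref{gluePmfldWithbound}---is apt and is exactly the sort of thing the paper's remark about ``additional notational complexity'' is meant to cover.
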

\begin{theorem}
Let $X$ be a finite CW-complex, $A$ be a unital $C^*$-algebra, $P$ and $P^{\prime}$ be smooth, compact, ${\rm spin^c}$-manifolds, each of which has a trivial stable normal bundle and well-defined dimension, and $F$ and $F^{\prime}$ be vector bundles over $P$ and $P^{\prime}$ respectively. If $(P, [F])\sim (P^{\prime}, [F^{\prime}])$ as elements in $K_*(pt)$, then $K_*(X;(P,F);A) \cong K_*(X;(P^{\prime},F^{\prime});A)$; moreover, the isomorphism is natural (with respect to both $X$ and $A$).
\end{theorem}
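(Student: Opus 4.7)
The plan is to follow the proof of Theorem~\ref{uniquenessThm} essentially verbatim, tracking the additional bundle twisting data throughout. First, I would invoke the equivalence of the bordism relation on $K_*(pt)$ with normal bordism (\cite[Corollary 4.5.16]{Rav}) to obtain trivial normal bundles $N$ and $N^{\prime}$ over $P$ and $P^{\prime}$ such that $(P, [F])^N \sim_{bor} (P^{\prime}, [F^{\prime}])^{N^{\prime}}$. Then I would reduce to the case that these normal bundles have constant even rank (call them $2n$ and $2n^{\prime}$), using the same decomposition into components and the same trick of adding further trivial bundles as in the earlier proof. After this reduction, the bordism can be written as a bordism $(W, \nu)$ between $(P \times S^{2n}, (\pi^{P\times S^{2n}}_{S^{2n}})^{*}(\beta_{2n}) \otimes (\pi^{P\times S^{2n}}_{P})^{*}(F))$ and $(P^{\prime}\times S^{2n^{\prime}}, (\pi^{P^{\prime}\times S^{2n^{\prime}}}_{S^{2n^{\prime}}})^{*}(\beta_{2n^{\prime}}) \otimes (\pi^{P^{\prime}\times S^{2n^{\prime}}}_{P^{\prime}})^{*}(F^{\prime}))$. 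Fix explicit vector bundle representatives $[F_W] - [\tilde F_W]$ of $\nu$ and explicit vector bundle isomorphisms on $\partial_0 W \cong P \times S^{2n}$ and $\partial_1 W \cong P^{\prime} \times S^{2n^{\prime}}$ that identify these restrictions with the pullbacks of $\beta_{2n} \otimes F$ and $\beta_{2n^{\prime}} \otimes F^{\prime}$ respectively.

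Next I would define $\Psi_{(W,\nu)}: K_*(X;(P,F);A) \to K_*(X;(P^{\prime},F^{\prime});A)$ on a cycle $((Q,\beta Q), (E_Q, E_{\beta Q}, \alpha), f)$ by producing the $P^{\prime}$-manifold $(Q \times S^{2n} \cup_{\partial Q \times S^{2n}} \beta Q \times W, \beta Q \times S^{2n^{\prime}})$ together with the bundles
$(\pi^{Q \times S^{2n}}_Q)^{*}(E_Q) \otimes (\pi^{Q \times S^{2n}}_{S^{2n}})^{*}(F_{S^{2n}})$ on $Q \times S^{2n}$ glued to $(\pi^{\beta Q \times W}_{\beta Q})^{*}(E_{\beta Q}) \otimes (\pi^{\beta Q \times W}_{W})^{*}(F_W)$ on $\beta Q \times W$, with $E^{\prime}_{\beta Q} = (\pi^{\beta Q \times S^{2n^{\prime}}}_{\beta Q})^{*}(E_{\beta Q}) \otimes (\pi^{\beta Q \times S^{2n^{\prime}}}_{S^{2n^{\prime}}})^{*}(F^{\prime}_{S^{2n^{\prime}}})$. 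The critical point is that the clutching isomorphism $\alpha$ from Definition~\ref{defGenCyc}, together with the fixed isomorphisms on $\partial_0 W$, assembles into the gluing datum on $\partial Q \times S^{2n} \cong P \times \beta Q \times S^{2n}$, while the fixed isomorphism on $\partial_1 W$ produces the new clutching datum $\alpha^{\prime}$ relating the restriction to $\beta Q \times S^{2n^{\prime}} \times P^{\prime}$ with $(\pi)^{*}(E^{\prime}_{\beta Q}) \otimes (\pi)^{*}(F^{\prime})$, exactly as required by Definition~\ref{defGenCyc}. The function $g$ is defined as $f$ on $Q$ and $f \circ \pi^{\beta Q \times W}_{\beta Q}$ on $\beta Q \times W$, as before.

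I would then verify well-definedness under the bordism and vector bundle modification relations. For bordism, if $((Q,\beta Q), (E_Q, E_{\beta Q}, \alpha), f) = \partial ((\bar Q, \beta \bar Q), (\bar E_Q, \bar E_{\beta Q}, \bar \alpha), \bar f)$, one applies the ``straightening the angle'' construction of \cite{CFPerMap} (or \cite[Appendix]{Rav}) to $\beta \bar Q \times W$, producing the required $P^{\prime}$-bordism exactly as in the proof of Theorem~\ref{uniquenessThm}; the only change is that the $F$-twist on $\partial Q$ and the $F^{\prime}$-twist on the new boundary are built into the bundle data via the fixed isomorphisms, so they propagate along the straightening. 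The vector bundle modification relation is checked exactly as in the earlier argument, since a ${\rm spin}^c$ $P$-bundle $(V_Q, V_{\beta Q})$ pulls back to an even-rank ${\rm spin}^c$ $P^{\prime}$-bundle on the new manifold. Finally, the inverse is $\Psi_{(-W, \nu)}$, by the standard ``double is a boundary'' argument. Naturality in $X$ and $A$ is immediate from the explicit formula.

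The main obstacle will be verifying that the bundle clutching datum on the new $P^{\prime}$-manifold assembles coherently: in the original theorem the boundary bundle pulled back along $\pi$ with no extra twist, but here the $F$-twist on $E_Q|_{\partial Q}$ must be converted into an $F^{\prime}$-twist on the new boundary via the composite of $\alpha$ with the fixed isomorphisms coming from $\nu$ on $\partial_0 W$ and $\partial_1 W$. Once this bookkeeping is set up carefully, the geometric arguments (straightening the angle, independence of choices, compatibility with vector bundle modification) go through unchanged.
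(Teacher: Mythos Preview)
Your proposal is correct and follows precisely the approach indicated in the paper: the paper does not give a separate proof of this theorem, stating instead that it ``can be obtained using our previous proofs with only additional notational complexity,'' and your outline carries out exactly this adaptation of the proof of Theorem~\ref{uniquenessThm}. The key bookkeeping point you flag---that the clutching datum $\alpha$ from Definition~\ref{defGenCyc} combines with the fixed boundary isomorphisms of $(W,\nu)$ to convert the $F$-twist into an $F^{\prime}$-twist on the new $P^{\prime}$-boundary---is indeed the only new wrinkle, and you have identified it correctly.
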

\begin{ex}
Let $(P,F)=(S^2, F_{{\rm Bott}})$ where $F_{{\rm Bott}}$ is the Bott bundle (see for example \cite{BD}). Then the map $\Phi_{(S^2,F_{{\rm Bott}})}$ is the Bott periodicity isomorphism; hence $K_*(X;(P,F);A)\cong \{0\}$.
\end{ex} 
\noindent
\\
{\bf Acknowledgments} \\
The author thanks Magnus Goffeng and Thomas Schick for discussions.  This work began while the auther held an NSERC Postdoctoral Fellowship at Georg-August Universit${\rm \ddot{a}}$t, G${\rm \ddot{o}}$ttingen.  

\vspace{0.25cm}
Email address: robin.deeley@gmail.com \vspace{0.25cm} \\
{ \footnotesize Universit${\rm \acute{e}}$ Blaise Pascal, Clermont-Ferrand II, Laboratoire de Math${\rm \acute{e}}$matiques, Campus des C${\rm \acute{e}}$zeaux B.P. 80026 63177 Aubi${\rm \grave{e}}$re cedex, France}
\end{document}